\theoremstyle{plain}
\newtheorem{theorem}{Theorem}[subsection]
\newtheorem{proposition}[theorem]{Proposition}
\newtheorem{lemma}[theorem]{Lemma}
\newtheorem{definition-proposition}[theorem]{Definition/Proposition}
\theoremstyle{definition}
\newtheorem{definition}[theorem]{Definition}
\theoremstyle{remark}
\newtheorem{remark}[theorem]{Remark}
\theoremstyle{example}
\newtheorem{example}[theorem]{Example}
\theoremstyle{notation}
\newcommand\C{\mathbb{C}}
\newcommand\R{\mathbb{R}}
\newcommand\h{{\mathcal{H}}}
\newcommand\p{{\text{\bfseries{P}}}}
\newcommand\Dp{\mathrm{d}_{\text{{\bfseries{P}}}}}
\newcommand\Dr{\mathrm{D}}
\DeclareMathOperator*{\esssup}{ess\,sup}
\DeclareMathOperator*{\adm}{adm}
\newcommand\dz{\frac{\partial}{\partial z}}
\newcommand\dt{\frac{\partial}{\partial t}}
\newcommand\dr{\text{dR}}
\newcommand\ru{\text{R}}
\newcommand{\Addresses}{{
  \bigskip
  \footnotesize

\textsc{Sorbonne Universit\'e, Universit\'e Paris Diderot, CNRS, Institut de Math\'ematiques de Jussieu-Paris Rive Gauche, IMJ-PRG, F-75005, Paris, France}\par\nopagebreak
  \textit{E-mail address} : \texttt{robin.timsit@imj-prg.fr}}}
\title{Quadratic differentials in spherical CR geometry}
\author{Robin Timsit}
\date{}
\begin{document}

\maketitle
\abstract
This paper deals with the notion of quadratic differential in spherical CR geometry (or more generally on strictly pseudoconvex CR manifolds). We get to this notion by studying a splitting of Rumin complex and discuss its first features such as trajectories and length. We also define several differential operators on quadratic differentials that lead to analogue of half-translation structures on spherical CR manifolds. Finally, we work on known examples of quasiconformal maps in the Heisenberg group with extremal properties and explicit how quadratic differentials are involved in those.

In addition, on our way to quadratic differentials, we define a differential complex on strictly pseudoconvex CR manifolds with a finite dimensional cohomology space. It leads to a new CR invariant that we compute for compact manifolds endowed with a CR action of the circle.

\section*{Introduction}
A $3$-dimensional strictly pseudoconvex CR manifold is a $3$-manifold endowed with a contact structure and a complex structure on it. This is a natural extension of the notion of Riemann surface. Holomorphic forms and holomorphic quadratic differentials on Riemann surfaces are classical and well-studied objects. One would like analogous objects on strictly pseudoconvex CR manifolds. On such manifolds, a Dolbeault type complex has been introduced by Kohn and Rossi \cite{KR} (see also \cite{Bog, FK}). It is known as the tangential Cauchy-Riemann or $\overline \partial_b$ complex and was used for various purposes. 

On the other hand, Rumin \cite {Rum} constructed an adapted differential complex on contact manifolds which works modulo the contact structure. In \cite{GL}, Garfield and Lee clarified a splitting of Rumin complex on strictly pseudoconvex CR manifolds that appeared implicitly in Rumin's work. This splitting of Rumin complex also led to a Dolbeault type complex whose cohomology is the same as the $\overline \partial_b$ one. Namely, the splitting of Rumin complex gives complex line bundles $\wedge^{p,q}$ (for $0\le p+q \le 3$, $0\le p \le 2$ and $0\le q \le 1$) whose sections are similar to $(p,q)$-forms and several linear differential operators on their sections. 

In order to define "holomorphic forms" on a CR manifold, it seems appropriate to consider already sections of $\wedge ^{1,0}$. On those sections, there are two differential operators: $\Dr ''$ with values in the space of sections of $\wedge^{1,1}$ and (contrary to the case of Riemann surfaces) $\Dr '$ with value in the space of sections of $\wedge^{2,0}$. Moreover, $(1,0)$-forms annihilating both $\Dr '$ and $\Dr ''$ are locally differentials of complex valued CR functions. Thus, we can consider the quotient of the space of $(1,0)$-forms annihilating both $\Dr '$ and $\Dr ''$ over the space of differentials of global CR functions. It turns out that this space can be naturally embedded in the first de Rham comohology space so that it is finite dimensional. It gives a CR invariant for compact strictly pseudoconvex CR manifolds that we compute for manifolds endowed with a CR action of the circle.

After that detour, we get to the heart of the present work which is quadratic differentials on spherical CR manifolds (or more generally on strictly pseudoconvex CR manifolds). Since we can understand sections of $\wedge^{1,0}$ as $(1,0)$-forms, it is natural to define quadratic differentials as sections of $\wedge^{1,0} \otimes \wedge^{1,0}$. The first geometric features such as trajectories and length follow easily from that definition. 

In order to define "holomorphy operators", we consider spherical CR manifolds. Those manifolds are CR manifolds locally equivalent to the unit sphere in $\C^2$ or equivalently, manifolds locally modeled on the Heisenberg group. 
The Heisenberg group $\h$ is $\C \times \R$ endowed with the group law
\[(z,t)(z',t')=(z+z', t+t'+2\Im (z\overline z')).\]
The CR structure on $\h$ is given by the CR distribution $V= span (Z)$ where $Z$ is the left invariant vector field
\[ Z= \dz +i\overline z \dt.\]
It is strictly pseudoconvex since $V\oplus \overline V = \ker (\omega)$ where $\omega = \mathrm{d}t -i\overline z \mathrm{d} z+i z \mathrm{d}\overline z$ is a contact form. Thus, a spherical CR manifold is a manifold $M$ endowed with an atlas of charts $(U_i , \varphi_i)$ with value in $\h$ and whose transition functions are CR diffeomorphisms of $\h$ (that is diffeomorphisms preserving $V$). On these manifolds, we manage to extend the operators $\Dr '$ and $\Dr ''$ into operators $\Dr_2 '$ and $\Dr_2 ''$ defined on quadratic differentials such that any quadratic differential annihilating both $\Dr_2 '$ and $\Dr_2 ''$ is locally the square of the differential of a CR function. Let us point out that one cannot extend directly $\Dr '$ and $\Dr''$ to quadratic differentials because the line bundle $\wedge^{1,0}$ isn't a CR line bundle in general (that is a line bundle whose transition functions are CR functions).

In order to have some structure on a spherical CR manifold similar to a half-translation structure, annihilating these two operators isn't enough. Namely, we need to get from quadratic differentials that are "locally the square of the differential of a CR function" to quadratic differentials that are "locally the square of the differential of the complex part of a chart of the manifold with value in the Heisenberg group". For that, we introduce a third differential operator $B_2$ on quadratic differentials. Then, a quadratic differential $q$ annihilating $\Dr_2 '$, $\Dr_2 ''$ and $B_2$ induce "natural coordinates" on the spherical CR manifold (that is coordinates $(z,t)$ where $q$ is $\mathrm{d}z^2$ up to a contact form). Moreover, two such natural coordinates $(z,t)$ and $(z',t')$ only differ by $(z',t') = \left(\pm z+z_0, t+t_0 + 2\Im(\pm z \overline z_0)\right)$.

As an end, we express the known examples of extremal quasiconformal map in the Heisenberg group as maps dilating trajectories of quadratic differentials. This is an interpretation of the results obtained in \cite{BFP, BFP2, Tim} in terms of trajectories of quadratic differentials. In particular, we find explicitly all quasiconformal maps between cylinders of spherical annuli in the Heisenberg group that dilate trajectories of a couple of given quadratic differentials. For cylinders, there is only a $1$-parameter family of such maps and for spherical annuli, a $2$-parameters family. This is a strong contrast with the classical case where quasiconformal maps dilating horizontal trajectories of quadratic differential generally exist at a large number.
\newline

The paper is organized as follow. The first section briefly presents Rumin complex and its Garfield-Lee decomposition. We also introduce another differential differential complex on strictly pseudoconvex CR manifold with a finite dimensional cohomology space that we compute in the case of manifolds endowed with a CR action of the circle. Section \ref{sec2} introduces quadratic differentials on (spherical or not) CR manifolds. For that, we begin with a description of the splitting of Rumin complex in spherical CR geometry. Then, we define quadratic differentials on strictly pseudoconvex CR manifolds and its first geometric features. After that, we define the operators $\Dr_2 '$, $\Dr_2 ''$ and $B_2$ on quadratic differentials and explain how one can extend them to operators on forms of any positive degree. The last section deals with extremal quasiconformal maps in the Heisenberg group as maps dilating trajectories of quadratic differentials. First, we recall the notions of quasiconformal maps and of modulus of a curve family in the case of the Heisenberg group. Then, we express the quadratic differentials involved in known examples of extremal quasiconformal maps in the Heisenberg group and maps dilating trajectories of those quadratic differentials.

\subsubsection*{Acknowledgement}
The author would like to thank Michel Rumin for fruitful discussions regarding this article and especially for the proofs of Propositions \ref{prop1.3.5} and \ref{prop1.3.6}.


\section{Rumin complex and its splitting}\label{sec1}
\subsection{Rumin complex on contact manifolds}
Let $M$ be an orientable $3$-manifold with a contact structure \p, that is, $\text{\p} \subset  TM$ is a rank $2$ distribution defined locally as the kernel of a $1$-form $\omega$ such that $\omega\wedge \mathrm{d} \omega$ is nowhere zero; and so, $(\mathrm{d}\omega)_{|\text{\p}}$ is nondegenerate. On such manifolds, Rumin \cite{Rum} defined a differential complex as follow.

Let $\Omega^\bullet(M)$ be the algebra of differential forms on $M$. If $U$ is a (sufficiently small) open subset of $M$, let $\omega_U$ be a $1$-form on $U$ with kernel $\p$. Then, two such forms only differ by multiplication with a nowhere vanishing real valued function so that the differential ideal generated by all these $\omega_U$, denoted $I^\bullet$, is well defined. To be precise, for every integer $k$, with the convention that $\Omega^k = \{0\}$ if $k<0$,
\begin{multline*}
I^k = \{\gamma \in \Omega^k (M) \ | \  \forall \omega_U \text{ with } \p = \ker \omega_U \text{ , }  \gamma = \omega_U\wedge\alpha + \mathrm{d}\omega_U \wedge \beta \\
\text{ with } \alpha \in \Omega^{k-1} (U), \beta \in \Omega^{k-2} (U) \}.
\end{multline*}
Consider also $F^\bullet$ the annihilator of $I^\bullet$. That is, for every integer $k$, 
\[F^k = \{ \alpha \in \Omega^k (M) \ | \ \forall \omega_U \text{ with } \p = \ker \omega_U \ ,\ \alpha \wedge \omega_U = \alpha \wedge \mathrm{d} \omega_U = 0\}.\]
Moreover, let $E^k := \faktor{\Omega^k(M)}{I^k}$, then it is easy to see that
\[E^0 = C^\infty (M), \ E^1 = \faktor{\Omega^1(M)}{<\omega_U>}, \ E^k = \{ 0\} \text{ if $k \ge 2$},\]
\[ F^2 = \{ \alpha \in \Omega^2 (M) \ | \ \forall \omega_U \text{ with } \p = \ker \omega_U\text{ , } \omega_U \wedge \alpha = 0\}, \ F^3 = \Omega ^3 (M) \text{ and }\]
\[F^k = \{ 0\} \text{ if $k\le 1$.}\]
We have then two natural differential operators 
\[ \mathrm{d}_\text{\p} = pr_{E_1} \circ \mathrm{d} : E^0 \longrightarrow E^1 \text{ and } \mathrm{d}_\text{\p} = \mathrm{d}_{|F^2} : F^2 \longrightarrow F^3\]
where $pr_{E_1}$ is the canonical projection from $\Omega^1 (M)$ to $E^1$.
In order to get a differential complex, one needs to define an operator $\mathrm{D} : E^1 \longrightarrow F^2$ such that $d_\text{\p} \circ \mathrm{D} : E^1 \longrightarrow F^3$ and $\mathrm{D} \circ d_\text{\p} : E^0 \longrightarrow F^2$ vanish. Rumin in \cite{Rum} defines $\Dr : E^1 \longrightarrow F^2$ in the following way.

\begin{lemma}
If $\alpha \in E^1$, there is a unique representative $\gamma \in \Omega^1 (M)$ of $\alpha$ such that $\mathrm{d} \gamma \in F^2$, then,
\[ \mathrm{D} \alpha = \mathrm{d}\gamma.\]
\end{lemma}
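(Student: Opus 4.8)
The plan is to exhibit, for each $\alpha \in E^1$, the unique corrective term one must add to an arbitrary $1$-form representing $\alpha$, and then to identify $\mathrm{D}\alpha$ with its differential. Recall that $I^1$ is exactly the space $\Gamma(L)$ of sections of the line bundle $L = \mathrm{Ann}(\p) \subset T^*M$: on an open set $U$ carrying a contact form $\omega_U$ with $\ker \omega_U = \p$, such a section is written $f\,\omega_U$ for a unique $f \in C^\infty(U)$. So if $\gamma_0 \in \Omega^1(M)$ is any fixed representative of $\alpha$, every representative is $\gamma = \gamma_0 + s$ with $s \in I^1$, and by definition of $F^2$ the condition $\mathrm{d}\gamma \in F^2$ reads $\omega_U \wedge \mathrm{d}\gamma = 0$ on each such $U$.

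First I would carry out the local computation. Writing $s = f\,\omega_U$ on $U$, one has $\mathrm{d}s = \mathrm{d}f \wedge \omega_U + f\,\mathrm{d}\omega_U$, hence $\omega_U \wedge \mathrm{d}s = f\,\omega_U \wedge \mathrm{d}\omega_U$ since $\omega_U \wedge \mathrm{d}f \wedge \omega_U = 0$. Thus $\omega_U \wedge \mathrm{d}\gamma = 0$ is equivalent to the \emph{pointwise linear} equation $\omega_U \wedge \mathrm{d}\gamma_0 = -f\,\omega_U\wedge\mathrm{d}\omega_U$ for the function $f$. Here the contact hypothesis enters decisively: $\omega_U \wedge \mathrm{d}\omega_U$ is a nowhere vanishing $3$-form on $U$, so it trivializes $\Omega^3(U)$ as a $C^\infty(U)$-module, and there is exactly one $f \in C^\infty(U)$ solving the equation. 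This yields a local section $s_U := f\,\omega_U \in \Gamma(L|_U)$, uniquely characterized by $\mathrm{d}(\gamma_0 + s_U) \in F^2|_U$.

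Next comes the globalization. On an overlap $U \cap V$, both $s_U$ and $s_V$ satisfy the characterizing property over $U \cap V$; running the same computation with $\omega_U$, the difference $s_U - s_V = h\,\omega_U$ obeys $h\,\omega_U \wedge \mathrm{d}\omega_U = 0$, so $h \equiv 0$ and $s_U = s_V$ on $U \cap V$. The local sections therefore glue to a global $s \in \Gamma(L) = I^1$, and $\gamma := \gamma_0 + s$ is a representative of $\alpha$ with $\mathrm{d}\gamma \in F^2$ (the latter being a local condition). Uniqueness of $\gamma$ follows from the same injectivity: if $\gamma, \gamma'$ are two such representatives, then $\gamma - \gamma' \in I^1$ and $\mathrm{d}(\gamma - \gamma') \in F^2$, and writing $\gamma - \gamma' = h\,\omega_U$ locally forces $h \equiv 0$. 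Since $\gamma$ depends only on $\alpha$, setting $\mathrm{D}\alpha := \mathrm{d}\gamma$ gives a well-defined map $E^1 \to F^2$, linear because $\alpha \mapsto \gamma$ is.

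I do not expect a serious obstacle: once the problem is localized, the condition $\mathrm{d}\gamma \in F^2$ collapses to an algebraic equation whose unique solvability is precisely the nondegeneracy of $\mathrm{d}\omega$ on $\p$, i.e. the contact condition. The only point deserving a little care is the passage from the local corrections to a global $1$-form, but this is automatic once their uniqueness is established — there is no genuine cocycle to verify, only the observation that two solutions of a uniquely solvable problem must agree on overlaps.
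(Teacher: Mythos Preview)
Your proof is correct and follows essentially the same route as the paper's: pick a representative, solve locally for the unique correction in $I^1$ using the contact condition, and glue by uniqueness. The only cosmetic difference is that the paper phrases the key equation as $(\mathrm{d}\beta + f_U\,\mathrm{d}\omega_U)_{|\p}=0$ and invokes nondegeneracy of $\mathrm{d}\omega_U|_\p$, whereas you wedge with $\omega_U$ and use that $\omega_U\wedge\mathrm{d}\omega_U$ is a volume form; these are equivalent formulations of the same step.
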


\begin{proof}
If $\beta \in \Omega ^1$ represents $\alpha$, locally, there is a unique function $f_U \in C^{\infty} (U)$ such that
\[ \omega_U \wedge \mathrm{d}(\beta + f_U \omega_U) = 0\]
Indeed, if $f_U \in C^\infty (U)$, then
\[\omega_U \wedge \mathrm{d}(\beta + f_U \omega_U) = \omega_U \wedge (\mathrm{d} \beta + \mathrm{d} f_U \wedge \omega_U + f_U \mathrm{d} \omega_U) = \omega_U \wedge (\mathrm{d}\beta + f_U \mathrm{d}\omega_U).\]
Thus,
\[ \omega_U \wedge \mathrm{d}(\beta + f_U \omega_U) = 0 \iff (\mathrm{d}\beta + f_U \mathrm{d}\omega_U)_{|\text{\p}} = 0.\]
Since $(\mathrm{d}\omega_U)_{|\text{\p}}$ is nondegenerate, there is a unique such function.
\newline
So, there is a unique $\widetilde \alpha_U \in \Omega^1 (U)$ representing $\alpha$ on $U$ and such that $\mathrm{d} \widetilde \alpha_U \in F^2 (U)$. Uniqueness ensures that these $\widetilde \alpha_U$ define a global $1$-form $\alpha \in \Omega^1 (M)$ representing $\alpha$ and such that $\mathrm{d} \widetilde \alpha \in F^2 (M)$.
\end{proof}

$\mathrm{D}$ being defined, we just have to verify that $\mathrm{d}_\text{\p} \circ \mathrm{D}$ and $\mathrm{D} \circ \mathrm{d}_\text{\p}$ vanish.
\newline
If $\alpha \in E^1$, let $\gamma \in \Omega^1$ be the unique representative of $\alpha$ such that $\mathrm{d} \gamma \in F^2$. Then,
\[ (\mathrm{d}_\text{\p} \circ \mathrm{D})\alpha = (\mathrm{d} \circ \mathrm{d}) \gamma = 0.\]
If $f\in C^{\infty} (M)$, by definition of $\mathrm{d}_\text{\p}$, $\mathrm{d} f$ is a representative of $\mathrm{d}_\text{\p} f$ and $\mathrm{d} (\mathrm{d}f) = 0 \in F^2$. By definition of $\Dr$,
\[ (\mathrm{D} \circ \mathrm{d}_\text{\p} )f = \mathrm{d} (\mathrm{d} f) = 0.\]

\begin{definition}

Rumin's complex is the differential complex
\[ 0 \longrightarrow \R \longrightarrow E^0 \overset{\mathrm{d}_\text{\p}} \longrightarrow E^1 \overset{\mathrm{D} }\longrightarrow F^2 \overset{\mathrm{d}_\text{\p}} \longrightarrow F^3 \longrightarrow 0.\]
We write $H^{k}_\ru (M)$ the cohomology spaces corresponding to this complex.

\end{definition}

It is proved in \cite{Rum} that this complex is locally exact and so it is an acyclic resolution of the constant sheaf. Its cohomology spaces are then isomorphic to the corresponding ones in de Rham cohomology. In the following, we will denote $H^\bullet_\dr (M)$ the de Rham cohomology.

\begin{remark}\label{rk1}
The spaces $E^k$ and $F^k$ can be interpreted as spaces of sections of vector bundles. Let $A$ be the subbundle of $T^\ast M$ of forms vanishing on the contact distribution $\p$. Then, $E^1$ is the space of sections of the bundle
\[\wedge ^1 = \faktor{T^\ast M}{A} \simeq \p ^\ast. \]
Let $\wedge ^2$ be the subbundle of $\wedge ^2 T^\ast M$ of forms vanishing on $\p ^2$, then $F^2$ is the space of sections of $\wedge ^2$.
\end{remark}

\subsection{Splitting of Rumin complex on strictly pseudoconvex CR manifolds}
In \cite{GL}, the authors gave a decomposition of the Rumin complex on a strictly pseudoconvex CR manifold. We emphase that the decomposition of the spaces $E^k$, $F^k$ and of the operators $\Dp$ and $\Dr$ appeared in Rumin's work \cite{Rum}.

\begin{definition}
A CR structure on a $2n+1$ dimensional manifold $M$ is a rank $n$ subbundle $V \subset \C TM$ such that
\[ V \cap \overline V = \{ 0\} \text{ and } [C^\infty (U,V_{|U}), C^\infty (U,V_{|U})] \subset C^\infty (U,V_{|U})\]
for every open set $U$ in $M$.
\newline
If $M$ is a $3$-dimensional manifold, the second condition is automatically verified for every rank $1$ subbundle $V \subset \C TM$.
\newline
A strictly pseudoconvex CR manifold is a CR manifold $(M,V)$ such that $V\oplus \overline V$ is a (complex) contact structure on $M$.
\end{definition}

Let now $(M,V)$ be a $3$-dimensional strictly pseudoconvex CR manifold. Then, $\p = V\oplus \overline V$ is a contact structure on $M$ and we can consider Rumin complex on $M$. In order to decompose the Rumin complex, a first step is to decompose the spaces $E^k$ and $F^k$. Let $p, q$ be non-negative integers, then define
\[ E^{p,q} = \{ \alpha \in E^{p+q} \ | \ \exists \gamma \in \Omega ^{p+q} \text{ s.t. } pr_{E_1} (\gamma) = \alpha \text{ and } \gamma_{|V\oplus \overline V} \text{ is of type $(p, q)$}\}\]
and 
\[ F^{p,q} = \{ \gamma \in F^{p+q} \ | \ (\iota_X \gamma)_{|V\oplus \overline V} \text{ is of type $(p-1, q)$ } \forall X \notin V\oplus \overline V \}\]
with the standard convention that a $(p,q)$-form with $p<0$ or $q<0$ is $0$. The only non-trivial spaces are
\[ E^{0,0} = C^{\infty} (M, \C), \ E^{1,0}, \ E^{0,1}, \ F^{2,0}, \ F^{1,1} \text{ and } F^{2,1} = \Omega^3 (M, \C).\]
It is also clear that $\C E^k = \oplus_{p+q = k} E^{p,q}$ and $\C F^k = \oplus_{p+q = k} F^{p,q}$ and, for $p+q \neq 1$, $\mathrm{d}_\p$ induces
\[ \mathrm{d}_\p : E^{p,q} \longrightarrow E^{p+1, q}\oplus E^{p, q+1} \text{ and } \mathrm{d}_\p : F^{p,q} \longrightarrow F^{p+1, q} \oplus F^{p, q+1}\]
which allows us to decompose $\mathrm{d}_\p$ in two operators $\mathrm{d}' : R^{p,q} \longrightarrow R^{p+1, q}$ and $\mathrm{d}'' : R^{p,q} \longrightarrow R^{p,q+1}$ where $R^{p,q} = E^{p,q}$ if $p+q \le 1$ and $F^{p,q}$ if not.
\newline
When $p+q = 1$, things are not as easy. In general,
\[ \Dr : E^{p,q} \longrightarrow F^{p+1, q} \oplus F^{p,q+1} \oplus F^{p+2, q-1},\]
which gives a decomposition in three operators
\[ \Dr ' : E^{p,q} \longrightarrow F^{p+1, q} , \ \Dr '' : E^{p,q} \longrightarrow F^{p, q+1} \text{ and } \Dr^+ : E^{p,q} \longrightarrow F^{p+2 , q-1}.\]
To be precise,
\[ \Dr : E^{1,0} \longrightarrow F^{2,0} \oplus F^{1,1}\]
so that $\Dr^+$ vanishes on $E^{1,0}$ and
\[ \Dr : E^{0,1} \longrightarrow F^{1,1} \oplus F^{2,0}\]
which means that $\Dr ''$ vanishes on $E^{0,1}$. Relations $\Dp \circ \mathrm{D} = \mathrm{D} \circ \Dp = 0$ and the decomposition according to the bidegree give the following:
\[\mathrm{d}''\Dr''=\Dr''\mathrm{d}'' = 0,\]
\[\mathrm{d}'\Dr'' + \mathrm{d}''\Dr' = \Dr'\mathrm{d}'' + \Dr'' \mathrm{d}' = 0,\]
\[\mathrm{d}'\Dr^+ = \Dr^+\mathrm{d}' = 0,\]
\[\mathrm{d}'\Dr'+\mathrm{d}''\Dr^+ = \Dr' \mathrm{d} ' + \Dr^+ \mathrm{d}'' = 0\]
\begin{remark}
We don't obtain a double complex since $\mathrm{d}'\Dr'$ and $\Dr' \mathrm{d}'$ don't vanish.
\end{remark}

\begin{definition}
The Garfield-Lee complexes are given by the operators $\mathrm{d}''$ and $\Dr''$. That is, the three complexes
\[ 0\longrightarrow\text{CR ($M$)} \longrightarrow E^{0,0} \overset{\mathrm{d}''} \longrightarrow E^{0,1} \overset{\Dr ''} \longrightarrow 0\]
\[ 0 \longrightarrow \ker \left( \Dr'' : E^{1,0} \longrightarrow F^{1,1} \right) \longrightarrow E^{1,0} \overset{\Dr ''} \longrightarrow F^{1,1} \overset{\mathrm{d}''} \longrightarrow 0\]
\[ 0\longrightarrow \ker \left( \mathrm{d} '' : F^{2,0} \longrightarrow F^{2,1} \right) \longrightarrow F^{2,0} \overset{\mathrm{d}''} \longrightarrow F^{2,1} \longrightarrow 0.\]
Their cohomology spaces are denoted $H^{p,q}_{G-L} (M)$.
\end{definition}

It is clear that the $H^{p,q}_{G-L}$ are CR invariants of the manifold. Moreover, Theorem 1 in \cite{GL} states that the $H^{p,q}_{G-L}$ are isomorphic to their corresponding spaces in the $\overline \partial _b$-complex.

\begin{remark}\label{rk2}
As for Rumin complex, we can see spaces $E^{p,q}$ and $F^{p,q}$ as spaces of sections of complex line bundles. Let $B^{1,0}$ (resp. $B^{0,1}$) be the subbundle of $\C T^\ast M$ of forms vanishing on $\overline V$ (resp. the subbundle of $\C T^\ast M$ of forms vanishing on $V$). Then, $E^{1,0}$ is the space of sections of
\[ \wedge^{1,0} = \faktor{B^{1,0}}{A} \simeq V^\ast\]
and $E^{0,1}$ is the space of sections of
\[ \wedge^{0,1} = \faktor{B^{0,1}}{A} \simeq \overline V^\ast.\]
Let $\wedge ^{1,1}$ (resp. $\wedge ^{2,0}$) be the subbundle of $\wedge ^2$ of forms with vanishing interior product with every vector of $V$ (resp. the subbundle of $\wedge ^2$ of forms with vanishing interior product with every vector of $\overline V$). Then $F^{1,1}$ is the space of sections of $\wedge ^{1,1}$ and $F^{2,0}$ is the space of sections of $\wedge ^{2,0}$.
\end{remark}

\subsection[Another differential complex]{Another differential complex, definitions and computations}
\subsubsection*{Definition of the complex}
Let us introduce a new differential complex on strictly pseudoconvex $3$-dimensional CR manifolds heavily inspired by Rumin complex and its splitting. Let $(M,V)$ be a compact strictly pseudoconvex $3$-dimensional CR manifold. We consider again spaces $E^{p,q}$, $F^{p,q}$ and operators $\Dp, \mathrm{d} ', \mathrm{d} '', \Dr, \Dr ', \Dr ''$ and $\Dr^+$. The complex is:
\[ 0 \longrightarrow \C \longrightarrow \text{CR} (M) \overset{\Dp} \longrightarrow E^{1,0} \overset{\Dr} \longrightarrow F^2 \overset{\Dp} \longrightarrow F^3 \longrightarrow 0.\]
where $\text{CR}(M)$ is the space of CR functions on $M$. In the case of compact Riemann surfaces, the space of holomorphic forms is the space of $\mathrm{d}$-closed $(1,0)$-forms. Since there are no nonconstant holomorphic functions on compact Riemann surfaces, there are no $\mathrm{d}$-exact $(1,0)$-forms except $0$. Meaning that, the space of holomorphic forms on a compact Riemann surface identifies with the space
\[\faktor{ \{ \text{closed }(1,0)\text{-form} \}}{\{ \text{exact }(1,0)\text{-form}\}}.\]
Thus, we will be interested in the space:
\[ H^{1,0} (M) = \faktor{\ker \left(\Dr _{|E^{1,0}} : E^{1,0} \longrightarrow F^2\right)}{\mathrm{Im} \left( \mathrm{d}_{\p_{|\mathrm{CR}(M)}} : \mathrm{CR} (M) \longrightarrow E^{1,0}\right)}.\]
Since the space $H^{1,0} (M)$ takes into account only $(1,0)$-forms, we can notice that $\mathrm{Im} \left( \mathrm{d}_{\p_{|\mathrm{CR}(M)}} : \mathrm{CR} (M) \longrightarrow E^{1,0}\right) = E^{1, 0} \cap \mathrm{Im} \left(\Dp : C^{\infty} (M) \longrightarrow E^1\right)$ so that
\begin{eqnarray}\label{eq3.1}
H^{1,0} (M) = \faktor{E^{1,0} \cap \ker \left( \Dr : E^1 \longrightarrow F^2\right)}{E^{1, 0} \cap \mathrm{Im} \left(\Dp : C^{\infty} (M) \longrightarrow E^1\right)}.
\end{eqnarray}

The advantage of the space $H^{1,0} (M)$ is that it is finite dimensional.

\begin{proposition}
Let $(M,V)$ be a strictly pseudoconvex $3$-dimensional CR manifold, then
\[ \dim_\C \left( H^{1,0} (M)\right) \le \dim_\C (H^{1}_{{dR}} (M,\C)).\]
\end{proposition}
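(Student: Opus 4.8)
The plan is to construct a natural injective $\C$-linear map $\iota : H^{1,0}(M) \longrightarrow H^1_{dR}(M,\C)$; the asserted inequality is then immediate. Complexifying Rumin's complex gives
\[ 0 \longrightarrow \C \longrightarrow \C E^0 \overset{\Dp}\longrightarrow \C E^1 \overset{\Dr}\longrightarrow \C F^2 \overset{\Dp}\longrightarrow \C F^3 \longrightarrow 0, \]
which is still an acyclic resolution of the constant sheaf, so its first cohomology $H^1_\ru(M,\C)$ is canonically isomorphic to $H^1_{dR}(M,\C)$. By \eqref{eq3.1}, a class of $H^{1,0}(M)$ is represented by a form $\alpha \in E^{1,0} \subset \C E^1$ with $\Dr\alpha = 0$, and two representatives of the same class differ by an element of $E^{1,0}\cap\mathrm{Im}\bigl(\Dp : \C E^0 \to \C E^1\bigr)$. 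Sending $\alpha$ to its class in $H^1_\ru(M,\C) \simeq H^1_{dR}(M,\C)$ then gives a well-defined map $\iota$: indeed, if $\alpha = \Dp f$ for some $f \in \C E^0 = C^\infty(M,\C)$, then its Rumin class vanishes, so $\iota$ factors through the quotient in \eqref{eq3.1}. Concretely, by the lemma above each such $\alpha$ has a unique representative $\gamma\in\Omega^1(M,\C)$ with $\mathrm{d}\gamma\in\C F^2$, and $\mathrm{d}\gamma = \Dr\alpha = 0$; so one may take $\iota([\alpha])$ to be the de Rham class of the closed form $\gamma$, which is exact when $\alpha = \Dp f$ since then $\gamma = \mathrm{d}f$ by uniqueness.

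It remains to prove that $\iota$ is injective, which is essentially a tautology once \eqref{eq3.1} is in hand. Suppose $\iota([\alpha]) = 0$; this means precisely that the class of $\alpha$ in $H^1_\ru(M,\C)$ vanishes, i.e. $\alpha = \Dp f$ for some $f \in C^\infty(M,\C)$. Since moreover $\alpha \in E^{1,0}$, we get $\alpha \in E^{1,0}\cap\mathrm{Im}\bigl(\Dp : C^\infty(M,\C)\to \C E^1\bigr)$, which is exactly the subspace divided out in \eqref{eq3.1} (it coincides with $\mathrm{Im}\bigl(\Dp : \mathrm{CR}(M)\to E^{1,0}\bigr)$, because $\Dp f \in E^{1,0}$ forces $\mathrm{d}''f = 0$, i.e. $f$ is CR). Hence $[\alpha] = 0$ in $H^{1,0}(M)$, so $\iota$ is injective and $\dim_\C H^{1,0}(M) \le \dim_\C H^1_{dR}(M,\C)$.

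There is no genuine obstacle in this argument; the only points that need a moment's care are the identification of the cohomology of the complexified Rumin complex with $H^1_{dR}(M,\C)$ — which follows from Rumin's local exactness exactly as in the real case — and the observation, already recorded in the excerpt just before \eqref{eq3.1}, that the coboundary space removed in forming $H^{1,0}(M)$ is precisely $E^{1,0}$ intersected with the image of $\Dp$ on (complex-valued) functions.
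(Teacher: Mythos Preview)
Your argument is correct and is essentially the same as the paper's: you restrict the canonical projection $\ker\Dr \to H^1_\ru(M,\C)$ to $E^{1,0}\cap\ker\Dr$, observe that its kernel is exactly $E^{1,0}\cap\mathrm{Im}\,\Dp$ (the denominator in \eqref{eq3.1}), and then invoke the isomorphism $H^1_\ru(M,\C)\simeq H^1_{dR}(M,\C)$. The paper's proof is the same two sentences; your additional remarks on the explicit de Rham representative $\gamma$ and on why $\Dp f\in E^{1,0}$ forces $f$ to be CR are correct elaborations but not a different route.
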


\begin{proof}

The restriction to $\ker (\Dr : E^1 \longrightarrow F^2) \cap E^{1,0}$ of the canonical projection from $\ker (\Dr : E^1 \longrightarrow F^2)$ to $H^{1}_R (M,\C)$ has kernel $\mathrm{Im} (\mathrm{d}_\p) \cap E^{1,0}$ so that it induces an injective morphism from $H^{1,0}$ to $H^1_R (M,\C)$. Since $H^1_R (M,\C)$ is isomorphic to $H^{1}_{{dR}} (M,\C)$ we get the result.

\end{proof}

It is easy to see that Rumin's operators $\Dp$ and $\Dr$ are natural (that is, for every contact transform $f : (M, \p_M) \longrightarrow (N,\p_N)$, $f^\ast \circ \Dr = \Dr \circ f^\ast$ and $f^\ast \circ \Dp = \Dp \circ f^\ast$). Moreover, a CR diffeomorphism preserves the bidegree. Therefore, $H^{1,0}$ is a CR invariant :

\begin{proposition}

Let $(M,V)$ and $(N,W)$ be strictly pseudoconvex $3$-dimensional CR manifolds and $f : M \longrightarrow N$ be a CR diffeomorphism. Then $f^\ast$ induces an isomorphism from $H^{1,0} (N)$ on $H^{1,0} (M)$.

\end{proposition}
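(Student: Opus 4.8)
The plan is to verify that $f^*$ carries the numerator and the denominator in the description \eqref{eq3.1} of $H^{1,0}(N)$ isomorphically onto those of $H^{1,0}(M)$, and then to pass to the quotient. Concretely, $f^*$ acts on the ambient Rumin data of $N$ and, because $f$ is a contact transform, descends to the subquotient spaces; because it is CR, it respects the bidegree; and because it is a diffeomorphism, each induced map is a linear isomorphism. Combining these three facts with the naturality of $\Dp$ and $\Dr$ already recorded above yields the claim.

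First I would note that a CR diffeomorphism $f : M \to N$ is in particular a contact transform for $\p_M = V \oplus \overline V$ and $\p_N = W \oplus \overline W$: if $\omega_U^N$ is a local defining form for $\p_N$, then $f^*\omega_U^N$ is a local defining form for $\p_M$. Hence $f^*$ sends the differential ideal $I_N^\bullet$ to $I_M^\bullet$ and the annihilator $F_N^\bullet$ to $F_M^\bullet$, so it descends to linear isomorphisms $E_N^k \to E_M^k$ and $F_N^k \to F_M^k$, and these intertwine $\Dp$ and $\Dr$ by the naturality of Rumin's operators. Since $f$ preserves $V$ (and therefore $\overline V$), $f^*$ also preserves the bidegree, so it restricts to isomorphisms $E_N^{1,0} \to E_M^{1,0}$ and, on functions, to the algebra isomorphism $f^* : C^\infty(N,\C) \to C^\infty(M,\C)$.

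Next, from $f^* \circ \Dr = \Dr \circ f^*$ on $E^1$ I get that $f^*$ maps $\ker(\Dr : E_N^1 \to F_N^2)$ isomorphically onto $\ker(\Dr : E_M^1 \to F_M^2)$; intersecting with the $(1,0)$-part and using bidegree preservation, it maps $E_N^{1,0} \cap \ker \Dr$ isomorphically onto $E_M^{1,0} \cap \ker \Dr$. Likewise, from $f^* \circ \Dp = \Dp \circ f^*$ on $C^\infty$ together with the bijectivity of $f^*$ on functions, $f^*$ maps $\mathrm{Im}(\Dp : C^\infty(N) \to E_N^1)$ isomorphically onto $\mathrm{Im}(\Dp : C^\infty(M) \to E_M^1)$, hence $E_N^{1,0} \cap \mathrm{Im}\,\Dp$ isomorphically onto $E_M^{1,0} \cap \mathrm{Im}\,\Dp$. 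Therefore $f^*$ induces a well-defined linear bijection $H^{1,0}(N) \to H^{1,0}(M)$, which is the asserted isomorphism.

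I do not expect a genuine obstacle here: the mathematical content lies entirely in the naturality of $\Dp$ and $\Dr$ under contact transforms and in the preservation of bidegree under CR maps, both already available from the excerpt, after which the conclusion is the routine fact that an isomorphism of the ambient complexes respecting the grading induces an isomorphism on the corresponding subquotient. The single point deserving a line of care is that $f^*$ is genuinely defined on the quotient spaces $E^k$ and on $F^k$, which is exactly why one first checks $f^* I_N^\bullet = I_M^\bullet$ and $f^* F_N^\bullet = F_M^\bullet$.
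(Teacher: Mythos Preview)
Your proposal is correct and follows essentially the same approach as the paper: the paper's argument (given in the sentence immediately preceding the proposition, with no separate proof environment) simply records that $\Dp$ and $\Dr$ are natural under contact transforms and that a CR diffeomorphism preserves the bidegree, which is exactly what you unpack in detail. Your version merely spells out the routine verification that $f^\ast$ carries $I^\bullet$, $F^\bullet$, $E^{1,0}\cap\ker\Dr$, and $E^{1,0}\cap\mathrm{Im}\,\Dp$ to their counterparts, which the paper leaves implicit.
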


\subsubsection*{Circle bundles over surfaces}

An important class of $3$-dimensional manifolds is formed by circle bundles over surfaces. Let $(M,V)$ be a strictly pseudoconvex $3$-dimensional CR compact manifold endowed with a free action of $\mathrm{U}(1)$. The quotient $\Sigma = \faktor {M}{\mathrm{U}(1)}$ is a compact surface of genus $g$ and we denote $\Pi : M \longrightarrow \Sigma$ the projection. Since $\Pi$ is a submersion, it induces an injective morphism $\Pi ^\ast : H^{1}_{dR} (\Sigma, \C) \longrightarrow H^1 _{dR} (M, \C)$ and, since $pr_{E_1} : \Omega ^\bullet (M) \longrightarrow E^1$ induces an isomorphism from $H^1_{dR} (M)$ to $H^1_R (M)$,  $\overline \Pi ^\ast = pr_{E_1} \circ \Pi^\ast : \Omega^1 (\Sigma) \longrightarrow E^1$ also induces an injective morphism $\phi : H^1_{dR} (\Sigma) \longrightarrow H^1_R (M)$. First, we recall the following well-known for completeness sake.

\begin{lemma}
\label{lem3.2.4}
Let $N$ be a circle bundle over a compact surface $S$ of genus $g$ and let $e$ be the Euler class of the bundle. Then
\[ \begin{cases}
\dim (H^1_{dR} (N)) = 2g, \text{ if $e\neq0$}\\ 
\dim (H^1_{dR} (N)) = 2g +1, \text{ if $e=0$}
\end{cases}.\]

\end{lemma}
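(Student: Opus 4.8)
The plan is to read off both equalities from the Gysin exact sequence of the oriented circle bundle $\pi\colon N\to S$. This sequence is available because, genus being defined, $S$ is a closed oriented surface and the circle bundle is fibre-oriented, so the real Euler class $e\in H^2_{dR}(S)$ makes sense (in the situation of the paper $S=\Sigma$ is orientable and the bundle is principal). If one does not wish to quote it, it can be built directly in the principal $\mathrm{U}(1)$ case: averaging over the orbits — harmless since $\mathrm{U}(1)$ is compact — shows that $\mathrm{U}(1)$-invariant forms compute $H^\bullet_{dR}(N)$, and after fixing a connection $1$-form $\theta$ with $\mathrm{d}\theta=\pi^\ast\beta$, $\beta$ a closed $2$-form whose class is a nonzero multiple of $e$, every invariant form is uniquely $\pi^\ast\alpha+\theta\wedge\pi^\ast\gamma$; the identity $\mathrm{d}(\theta\wedge\pi^\ast\gamma)=\pi^\ast(\beta\wedge\gamma)-\theta\wedge\pi^\ast\mathrm{d}\gamma$ then exhibits the invariant complex as the mapping cone of cup product with $[\beta]$, whose long exact sequence is
\[
\cdots \to H^{k-2}_{dR}(S)\xrightarrow{\cup e} H^{k}_{dR}(S)\xrightarrow{\pi^\ast} H^{k}_{dR}(N)\xrightarrow{\pi_\ast} H^{k-1}_{dR}(S)\xrightarrow{\cup e} H^{k+1}_{dR}(S)\to\cdots,
\]
with $\pi_\ast$ integration along the fibre.

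Next I would isolate the piece in degree $1$. Since $H^{-1}_{dR}(S)=0$, $H^0_{dR}(S)\cong\R$ ($S$ connected) and $H^2_{dR}(S)\cong\R$ ($S$ closed and oriented), it becomes
\[
0\longrightarrow H^1_{dR}(S)\xrightarrow{\pi^\ast} H^1_{dR}(N)\xrightarrow{\pi_\ast} H^0_{dR}(S)\xrightarrow{\cup e} H^2_{dR}(S),
\]
and $\dim H^1_{dR}(S)=2g$. Under the isomorphism $H^2_{dR}(S)\cong\R$ given by $\int_S$, the last arrow is multiplication by the Euler number $\int_S e$; hence it is an isomorphism when $e\neq 0$ and zero when $e=0$.

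It then remains to split into the two cases. If $e\neq 0$, then $\pi_\ast$ vanishes on $H^1_{dR}(N)$, so $\pi^\ast\colon H^1_{dR}(S)\to H^1_{dR}(N)$ is onto; being injective as well, it is an isomorphism and $\dim H^1_{dR}(N)=2g$. If $e=0$, then $\pi_\ast\colon H^1_{dR}(N)\to H^0_{dR}(S)$ is onto, so
\[
0\longrightarrow H^1_{dR}(S)\xrightarrow{\pi^\ast} H^1_{dR}(N)\xrightarrow{\pi_\ast} H^0_{dR}(S)\longrightarrow 0
\]
is exact and $\dim H^1_{dR}(N)=2g+1$. (Alternatively, in the case $e=0$: since $H^2(S;\Z)\cong\Z$ is torsion free, $e=0$ forces the bundle to be trivial, $N\cong S\times\mathrm{U}(1)$, and Künneth gives $\dim H^1_{dR}(N)=\dim H^1_{dR}(S)+\dim H^1_{dR}(\mathrm{U}(1))=2g+1$.)

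There is no genuine difficulty: once the Gysin sequence is granted, the argument is the three-line case analysis of its degree-$1$ segment. The only points deserving care are the construction or invocation of that sequence — equivalently, the passage to invariant forms and the fact that $[\mathrm{d}\theta]$ is a nonzero multiple of $e$ exactly when $e\neq 0$ — and keeping the orientability hypotheses that make $H^2_{dR}(S)\cong\R$ and single out $\int_S e$ as the invariant separating the two cases.
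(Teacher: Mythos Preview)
Your proof is correct and follows essentially the same approach as the paper: both read off the degree-$1$ piece of the Gysin sequence for the circle bundle and split into the cases according to whether $\cup\, e$ is an isomorphism or zero. You supply additional detail (a sketch of how to obtain the Gysin sequence via invariant forms, and an alternative K\"unneth argument when $e=0$), but the core argument is identical.
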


\begin{proof}

Gysin long exact sequence reads here as:
\[
 0\rightarrow H^1_{dR} (S) \rightarrow H^1_{dR} (N) \rightarrow H^0_{dR} (S) \overset{e_\wedge}\rightarrow H^2_{dR} (S) \rightarrow H^2_{dR} (N) \rightarrow H^1_{dR} (S)\rightarrow 0
\]
where $e_\wedge$ is the exterior product by $e$. If $e \neq 0$, then $e_\wedge$ is an isomorphism. So, we have the following exact sequence:
\[ 0 \longrightarrow H^1_{dR} (S) \longrightarrow H^1_{dR} (N) \longrightarrow H^0_{dR} (S) \longrightarrow H^2_{dR} (S) \longrightarrow 0.\]
If $e = 0$, then we have:
\[ 0 \longrightarrow H^1_{dR} (S) \longrightarrow H^1_{dR} (N) \longrightarrow H^0_{dR} (S) \longrightarrow 0.\]
Since we know $H^k _{dR} (S)$, the result follows easily.

\end{proof}

This lemma has as important consequence that $\Pi ^\ast : H^{1}_{dR} (\Sigma, \C) \longrightarrow H^1 _{dR} (M, \C)$ is in fact an isomorphism as well as $\phi : H^1_{dR} (\Sigma) \longrightarrow H^1_R (M)$ when the circle bundle has non-vanishing Euler class. Moreover, we saw that we can see $H^{1,0}(M)$ as a subset of $H^1 _R (M, \C)$. Consequently, in the case of a circle bundle with non vanishing Euler class, $H^{1,0} (M)$ is isomorphic to a subspace of $H^1_{dR} (\Sigma, \C)$.
\newline

We now compute explicitly $H^{1,0}$ for compact CR circle bundles. Let $R$ be the global vector field induced by the $\mathrm{U}(1)$-action on the CR manifold $(M,V)$. We say that the $\mathrm{U}(1)$-action is transversal if for each $x\in M$
\[ \C T_x M = \C R_x \oplus V_x \oplus \overline V_x\]
and the action is said CR if
\[ [R, \Gamma (V)] \subset \Gamma (V)\]
where $\Gamma (V)$ refers to the space of sections of $V$. A {\it CR circle bundle} is a strictly pseudoconvex CR manifold endowed with a free, transversal, CR action of $\mathrm{U}(1)$. Let $(M,V)$ be a CR circle bundle, $R$ the vector field induced by the $\mathrm{U} (1)$ action and denote $\omega$ its dual form (that is, $\omega (R) = 1$ and $\omega_{|V\oplus \overline V} = 0$). Then, $\omega$ is a (global) contact form on $M$ and $R$ is its associated Reeb vector field. For the computation of $H^{1,0} (M)$ we have to recall a few facts about pseudohermitian manifolds.

A pseudohermitian structure on a manifold $N$ is a pair $(\omega , J)$ where $\omega$ is a contact form on $M$ and $(\p=\ker (\omega) , J)$ is a CR structure on $N$. From the discussion above, a CR circle bundle admits a natural pseudohermitian structure. One can define a natural connection on a pseudohermitian manifold: the Tanaka-Webser connection (see \cite{Tana, Web}).

\begin{definition}
Let $(N,\omega , J)$ be a pseudohermitian manifold. Then, there is a unique connection $\nabla : \Gamma (TM) \longrightarrow \Gamma (T^\ast M \otimes TM)$ such that
\begin{enumerate}
\item{for each $X\in \Gamma (TM)$, $\nabla_X \left( \Gamma(\p)\right) \subset \Gamma (\p)$,}
\item{$\nabla R = 0$, $\nabla J = 0$, $\nabla \mathrm{d} \omega = 0$,}
\item{its torsion $T$ satisfies 
\[ \forall X,Y \in \Gamma (\p), \ T(X,Y) = \mathrm{d} \omega (X,Y) R \text{ and}\]
\[ \forall X \in \Gamma (TM), \ T(R,JX) + JT(R,X) = 0.\]}
\end{enumerate}
$T(R,.)$ is called the pseudohermitian torsion of the manifold.
\end{definition}

Webster \cite[p.~33]{Web} proved that the pseudohermitian torsion vanishes if and only if the Reeb vector field induces a $1$-parameter family of CR automorphisms. In particular, a CR circle bundle has a vanishing pseudohermitian torsion.
The Webster metric $g$ on a pseudohermitian manifold is defined by
\[g(X,Y) = \mathrm{d} \omega (X,JY), \ g(X,R)=g(R,X) = 0, \ g(R,R) =1\]
for every $X,Y \in \Gamma (\p)$. This metric canonically extends to differential forms.
Once a metric is given, we can identify $E^1$ with the orthogonal of $I^1$ and then, define the adjoint operators of $\Dr$ and $\Dp$ denoted $\Dr ^\ast$ and $\delta_\p$ respectively. Following Rumin \cite[p.~290]{Rum}, define the laplacian
\[ \Delta_\p = \Dr^\ast \Dr + \left(\Dp \delta_\p\right)^2 \text{ on $E^1$.}\]
Then, Rumin proved a Hodge decomposition. If $(N,\omega , J)$ is a compact pseudohermitian manifold, then we have an orthogonal decomposition
\[E^1=\ker\ \Delta_\p\oplus\mathrm{Im}\ \Delta_\p.\] 
Moreover,
\begin{eqnarray}\label{eqkerd}
\ker\ \Dr = \ker \ \Delta_\p \oplus \mathrm{Im} \ \Dp.
\end{eqnarray} 
He also showed (see e.g. \cite[p.~312]{Rum}) that $\Delta_\p$ preserves the bidegree when the manifold has vanishing pseudohermitian torsion which leads to:

\begin{proposition}\label{prop1.3.5}
Let $M$ be a compact CR circle bundle. Then $H^{1,0}$ is isomorphic to $E^{1,0} \cap \ker \ \Delta_\p$.
\end{proposition}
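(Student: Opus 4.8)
The plan is to realise the isomorphism as the map $\Phi$ that sends a $\Delta_\p$-harmonic form of bidegree $(1,0)$ to its class in $H^{1,0}(M)$, and to check well-definedness, injectivity and surjectivity using Rumin's Hodge-type identity \eqref{eqkerd} together with the bidegree-invariance of $\Delta_\p$ recalled above (valid since a CR circle bundle has vanishing pseudohermitian torsion). Recall from \eqref{eq3.1} and the identity preceding it that
\[ H^{1,0}(M)=\faktor{E^{1,0}\cap\ker\bigl(\Dr\colon E^1\to F^2\bigr)}{E^{1,0}\cap\mathrm{Im}\bigl(\Dp\colon C^\infty(M)\to E^1\bigr)},\qquad E^{1,0}\cap\mathrm{Im}\,\Dp=\{\mathrm{d}'f\mid f\in\mathrm{CR}(M)\}, \]
and that, since $\Delta_\p$ preserves the bidegree, $\ker\Delta_\p=\bigl(E^{1,0}\cap\ker\Delta_\p\bigr)\oplus\bigl(E^{0,1}\cap\ker\Delta_\p\bigr)$.

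First I would record two elementary points. From $\Delta_\p=\Dr^\ast\Dr+(\Dp\delta_\p)^2$ and the self-adjointness of $\Dp\delta_\p$, a form $\beta\in\ker\Delta_\p$ satisfies $0=\langle\Delta_\p\beta,\beta\rangle=\|\Dr\beta\|^2+\|\Dp\delta_\p\beta\|^2$, hence $\Dp\delta_\p\beta=0$ and then $\|\delta_\p\beta\|^2=\langle\beta,\Dp\delta_\p\beta\rangle=0$; so $\ker\Delta_\p\subseteq\ker\Dr\cap\ker\delta_\p$. Secondly, for the Webster metric on a $3$-dimensional CR manifold $\mathrm{d}\omega$ vanishes on $V\times V$ and on $\overline V\times\overline V$, so $V\perp\overline V$ and hence $E^{1,0}\perp E^{0,1}$; in particular $\langle\mathrm{d}'g,\eta\rangle=0$ whenever $g\in C^\infty(M,\C)$ and $\eta\in E^{0,1}$.

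Since $\ker\Delta_\p\subseteq\ker\Dr$ by \eqref{eqkerd}, the inclusion $E^{1,0}\cap\ker\Delta_\p\hookrightarrow E^{1,0}\cap\ker\Dr$ descends to $\Phi\colon E^{1,0}\cap\ker\Delta_\p\to H^{1,0}(M)$. For injectivity, if $\Phi(\beta)=0$ then $\beta=\Dp f$ for some $f$, and $\beta\in\ker\delta_\p$, so $\|\beta\|^2=\langle\Dp f,\beta\rangle=\langle f,\delta_\p\beta\rangle=0$. For surjectivity, take $\alpha\in E^{1,0}\cap\ker\Dr$ and, using \eqref{eqkerd}, write $\alpha=\beta+\Dp f$ with $\beta\in\ker\Delta_\p$ and $f\in C^\infty(M,\C)$; split $\beta=\beta_{1,0}+\beta_{0,1}$ by bidegree, so $\beta_{0,1}\in E^{0,1}\cap\ker\Delta_\p\subseteq\ker\delta_\p$. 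Comparing the $(0,1)$-components of $\alpha=(\beta_{1,0}+\mathrm{d}'f)+(\beta_{0,1}+\mathrm{d}''f)$ gives $\mathrm{d}''f=-\beta_{0,1}$, hence
\[ \|\beta_{0,1}\|^2=\langle-\mathrm{d}''f,\beta_{0,1}\rangle=\langle\mathrm{d}'f,\beta_{0,1}\rangle-\langle\Dp f,\beta_{0,1}\rangle=\langle\mathrm{d}'f,\beta_{0,1}\rangle-\langle f,\delta_\p\beta_{0,1}\rangle=0 \]
by the two points above. Thus $\mathrm{d}''f=0$, i.e. $f\in\mathrm{CR}(M)$, so $\Dp f=\mathrm{d}'f\in E^{1,0}\cap\mathrm{Im}\,\Dp$ and $\alpha=\beta_{1,0}+\mathrm{d}'f$ with $\beta_{1,0}\in E^{1,0}\cap\ker\Delta_\p$; therefore $[\alpha]=\Phi(\beta_{1,0})$.

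The argument is entirely formal once Rumin's two inputs are granted. The only step where these really get used together is the vanishing of $\beta_{0,1}$: bidegree-invariance of $\Delta_\p$ puts $\beta_{0,1}$ in $\ker\Delta_\p$, hence (by the first point) in $\ker\delta_\p$, and the orthogonality of the $(1,0)/(0,1)$ splitting then forces $\|\beta_{0,1}\|^2=0$. I do not expect a serious obstacle beyond this; no analysis is needed beyond the Hodge decomposition \eqref{eqkerd}, which is quoted.
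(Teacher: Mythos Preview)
Your proof is correct and takes essentially the same approach as the paper: both realise the isomorphism via the Hodge-type decomposition \eqref{eqkerd} together with bidegree-preservation of $\Delta_\p$, the only cosmetic differences being that you build the map in the inverse direction (inclusion of harmonic forms into cohomology rather than orthogonal projection onto harmonics) and that you kill the $(0,1)$-component $\beta_{0,1}$ via $\ker\Delta_\p\subset\ker\delta_\p$ and the orthogonality $E^{1,0}\perp E^{0,1}$, whereas the paper uses $\beta^{0,1}\in\ker\Delta_\p\cap\mathrm{Im}\,\Delta_\p=\{0\}$ directly from the full Hodge decomposition.
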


\begin{proof}

Let $\Psi : E^1 \longrightarrow \ker \ \Delta_\p$ be the orthogonal projection. First, we show that $\Psi \left(E^{1,0}\right) \subset E^{1,0}$. Take $\alpha = \beta + \Delta_\p \gamma \in E^{1,0}$ with $\Delta_\p \beta = 0$ and decompose $\beta = \beta^{1,0} + \beta^{0,1}$, $\gamma = \gamma^{1,0} + \gamma^{0,1}$ with $\beta^{p,q} , \gamma^{p,q} \in E^{p,q}$. Then, since $\alpha \in E^{1,0}$, $\Delta_\p \beta = 0$ and $\Delta_\p$ preserves the bidegree, 
\[ \beta ^{0,1} = -\Delta_\p \gamma ^{0,1} \text{ and } \Delta_\p \beta^{0,1} = 0.\]
Therefore, $\beta ^{0,1} \in \ker\ \Delta_\p\cap\mathrm{Im}\ \Delta_\p = \{0\}$.

Now, consider $\Psi_{|E^{1,0} \cap \ker \ \Dr}$. Then, using the decomposition \ref{eqkerd}, its kernel is exactly $E^{1,0} \cap \mathrm{Im} \ \Dp$. Thus $\Psi_{|E^{1,0} \cap \ker \ \Dr}$ induces an injective linear map
\[ \overline \Psi : H^{1,0} \longrightarrow E^{1,0} \cap \ker \ \Delta_\p.\]
The surjectivity of $\overline \Psi$ follows from $\ker \ \Delta_\p \subset \ker \ \Dr$.
\end{proof}

The fact that $\Delta_\p$ preserves the bidegree also induces a decomposition of $H^1_R$ for compact pseudohermitian manifolds with vanishing pseudohermitian torsion. Namely, if $(N,\omega , J)$ is a compact pseudohermitian manifold with vanishing pseudohermitian torsion, then
\[ H^1_R (N) \simeq \ker \Delta_\p = E^{1,0} \cap \ker \Delta_\p \oplus E^{0,1} \cap \ker \Delta_\p.\] 
If $M$ is a compact CR circle bundle, using Lemma \ref{lem3.2.4}, we get then,
\[ \dim\left(H^1_R (M)\right) = \dim\left(H^1_{dR} (M)\right) = 2g\]
where $g$ is the genus of $\faktor{M}{\mathrm{U}(1)}$. In conclusion, we have:

\begin{proposition}\label{prop1.3.6}
Let $M$ be a compact CR circle bundle over a surface of genus $g$. Then
\[\dim \left(H^{1,0} (M) \right)= g.\]
\end{proposition}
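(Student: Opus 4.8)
The plan is to reduce the statement to a dimension count, using Proposition~\ref{prop1.3.5} together with the Hodge-theoretic facts recalled just before the statement, and then to produce the missing symmetry by complex conjugation. First I would invoke Proposition~\ref{prop1.3.5} to replace $H^{1,0}(M)$ by $E^{1,0}\cap\ker\Delta_\p$. Since $M$ is a CR circle bundle it carries its canonical pseudohermitian structure $(\omega,J)$, whose pseudohermitian torsion vanishes, so $\Delta_\p$ preserves the bidegree and Rumin's decomposition gives
\[ \ker\Delta_\p = \bigl(E^{1,0}\cap\ker\Delta_\p\bigr)\oplus\bigl(E^{0,1}\cap\ker\Delta_\p\bigr),\qquad \ker\Delta_\p\simeq H^1_\ru(M).\]
Lemma~\ref{lem3.2.4} (and $H^1_\ru\simeq H^1_{\dr}$) then yields $\dim_\C H^1_\ru(M)=2g$, hence $\dim_\C(E^{1,0}\cap\ker\Delta_\p)+\dim_\C(E^{0,1}\cap\ker\Delta_\p)=2g$. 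So everything comes down to showing that these two summands have the same complex dimension.

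For that I would use the complex conjugation $\alpha\mapsto\overline\alpha$. On the one hand, the subbundle $A$ is real, and the conjugate of a form vanishing on $\overline V$ vanishes on $V$, so conjugation descends to a conjugate-linear isomorphism $\wedge^{1,0}\to\wedge^{0,1}$ and therefore exchanges the section spaces $E^{1,0}$ and $E^{0,1}$. On the other hand, Rumin's operators $\Dp$ and $\Dr$ are real operators extended $\C$-linearly to the complexified complex, hence commute with conjugation; since the Webster metric is real-valued on real forms, the formal adjoints $\delta_\p$ and $\Dr^\ast$ commute with conjugation as well, and consequently so does $\Delta_\p=\Dr^\ast\Dr+(\Dp\delta_\p)^2$. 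Combining these two observations, conjugation restricts to a conjugate-linear bijection
\[ E^{1,0}\cap\ker\Delta_\p \;\overset{\sim}{\longrightarrow}\; E^{0,1}\cap\ker\Delta_\p,\]
and a conjugate-linear isomorphism preserves complex dimension, so the two spaces have equal dimension.

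Putting the two paragraphs together gives $2\dim_\C(E^{1,0}\cap\ker\Delta_\p)=2g$, hence $\dim_\C H^{1,0}(M)=\dim_\C(E^{1,0}\cap\ker\Delta_\p)=g$. The only step requiring genuine care is the commutation of $\Delta_\p$ with complex conjugation: all the geometric input (vanishing torsion, bidegree-preservation of $\Delta_\p$, the identification $\ker\Delta_\p\simeq H^1_\ru(M)$, and the value $2g$) has already been assembled in the excerpt, so the argument reduces to verifying that $\Delta_\p$ is built from real data, after which the dimension count is immediate.
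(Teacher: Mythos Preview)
Your argument is correct and follows the paper's approach: the paper assembles exactly the decomposition $\ker\Delta_\p=(E^{1,0}\cap\ker\Delta_\p)\oplus(E^{0,1}\cap\ker\Delta_\p)$ together with $\dim_\C\ker\Delta_\p=2g$ in the paragraph preceding the proposition, but does not spell out why the two summands have the same dimension; your conjugation step is the natural way to complete that sentence. The paper then adds, immediately after the proposition, the alternative observation that $\Pi^\ast$ induces an isomorphism $K(\Sigma)\to H^{1,0}(M)$, which recovers $\dim H^{1,0}(M)=g$ directly from the classical fact $\dim K(\Sigma)=g$.
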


Even more explicitly, $\Sigma = \faktor{M}{\mathrm{U}(1)}$ inherits of a Riemann surface structure for which the projection $\Pi : M \longrightarrow \Sigma$ is a CR map and $\Pi ^\ast$ induces a natural isomorphism from $K(\Sigma)$ to $H^{1,0}$ where $K(\Sigma)$ is the space of holomorphic forms on $\Sigma$.

\section{Quadratic differentials on (spherical) CR manifolds}\label{sec2}
In this section, we introduce the notion of {\it quadratic differential} on spherical CR manifolds and discuss its first features. Before that, in the first section we describe the splitting of Rumin complex in spherical CR geometry. Then, section \ref{sec2.1} gives the general definition of a quadratic differential on a strictly pseudoconvex CR manifold, explains what it induces in spherical CR geometry and discusses the first geometric data given by a quadratic differential such as trajectories and length. After that, we introduce several "operators of holomorphy" on quadratic differentials in section \ref{sec2.2}. And finally explain how those operators extend to forms of positive degree.

\subsection{Description in spherical CR geometry}
A (3-dimensional) spherical CR manifold is a CR manifold locally equivalent to the unit sphere in $\C^2$ endowed with its standard CR structure. Another local model of spherical CR geometry is the Heisenberg group. To begin, we recall a few basic facts about it.

The Heisenberg group $\h$ is $\C\times \R$ endowed with the group law 
\[(z,t)(z',t')=(z+z',t+t'+2\Im (z\overline z')).\]
The CR structure on it is given by $V_\h = span(Z)$ where $Z$ is the left invariant vector field
\[Z=\dz+i\overline z \dt.\]
It is a strictly pseudoconvex one since $V_\h \oplus \overline V_\h = \ker (\omega)$ where $\omega = \mathrm{d}t-i\overline z \mathrm{d}z + iz\mathrm{d}\overline z$ is a contact form. Moreover, it is well-known that $(\h,V_\h)$ is CR-equivalent to the sphere $S^3\subset \C^2$ minus one point. The CR automorphism group of $S^3$ is the group of biholomorphisms of the unit ball, that is $\mathrm{PU}(2,1)$ which verifies a Liouville type theorem. 

\begin{theorem}
Let $\varphi : U \longrightarrow V$ be a CR diffeomorphism between open subsets of $S^3$, then $\varphi$ is the restriction of a CR automorphism of $S^3$.
\end{theorem}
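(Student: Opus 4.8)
The plan is to prove the Liouville-type rigidity statement for the standard CR sphere $S^3 \subset \mathbb{C}^2$, namely that a CR diffeomorphism between two open subsets is always the restriction of a global CR automorphism (an element of $\mathrm{PU}(2,1)$). The cleanest route is to transfer the problem to the Heisenberg group $\mathcal{H}$ and to use the theory of CR functions together with the known structure of $\mathrm{PU}(2,1)$ acting on $S^3$.

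\medskip

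\emph{Step 1: Reduce to a local problem near a point and pass to $\mathcal{H}$.} Since $S^3$ minus a point is CR-equivalent to $\mathcal{H}$, and $\mathrm{PU}(2,1)$ acts transitively on $S^3$, it suffices to show that any CR diffeomorphism $\varphi : U \to V$ between connected open subsets of $S^3$ extends to an element of $\mathrm{PU}(2,1)$; and by composing with automorphisms we may assume the relevant points avoid a chosen point at infinity, so we can work entirely with CR diffeomorphisms between open subsets of $\mathcal{H}$. Here one uses that the CR automorphisms of $\mathcal{H}$ (the Heisenberg similarities: left translations, the dilations $\delta_\lambda(z,t) = (\lambda z, \lambda^2 t)$, the rotations $z \mapsto e^{i\theta}z$, and the Kor\'anyi inversion) all extend to $\mathrm{PU}(2,1)$ on $S^3$, so it is enough to show a local CR diffeomorphism of $\mathcal{H}$ agrees with one of these.

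\medskip

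\emph{Step 2: Show the components of $\varphi$ are CR functions and exploit their rigidity.} Writing $\varphi = (f, g) : U \subset \mathcal{H} \to \mathcal{H}$, the condition that $\varphi$ preserve the CR distribution $V = \mathrm{span}(Z)$ forces $Zf$ to be a multiple of $Z(\text{first component of }\varphi)$ in the right sense; more precisely, the complex coordinate $w = $ (first slot) pulled back satisfies $\overline{Z}(w \circ \varphi) = 0$, i.e. $w \circ \varphi$ is a CR function on $U$, and similarly the contact-form pullback condition $\varphi^* \omega = \mu\, \omega$ for a nonvanishing real function $\mu$ pins down the real coordinate. So the first component $F := w \circ \varphi$ is a CR function on $U \subset \mathcal{H}$ with $\overline{\partial}_b F = 0$, and one shows $dF$ is nonvanishing on the contact distribution (from $\varphi$ being a diffeomorphism). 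Then $F$ together with the equation $\varphi^*\omega = \mu \omega$ determines $g$ up to the ambiguity of the group structure: differentiating $g$'s defining relation and using $d\omega$-nondegeneracy recovers $g$ from $F$ and $|\partial F|^2$-type data, exactly mirroring how the Rumin/Garfield–Lee operators $\mathrm{D}'$, $\mathrm{D}''$ rigidify $(1,0)$-forms that are locally differentials of CR functions.

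\medskip

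\emph{Step 3: Classify the possible CR functions $F$.} This is the analytic heart. One shows that a CR function $F$ on a connected open subset of $\mathcal{H}$ whose differential is nondegenerate on $V$ and which arises as the first component of a CR diffeomorphism must be the restriction of a fractional-linear (Heisenberg-similarity or Kor\'anyi-type) map. The standard argument is to observe that $F$ extends as a holomorphic function to a side of the hypersurface $S^3 \subset \mathbb{C}^2$ (CR functions on strictly pseudoconvex boundaries extend holomorphically inward, by the Baouendi–Tr\'eves or Lewy extension phenomenon), then that the pair $(F, g)$ extends to a biholomorphism between domains in $\mathbb{C}^2$ preserving the unit ball locally, and finally invoke that biholomorphisms of the ball are linear fractional (Cartan). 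Each extension step uses strict pseudoconvexity, already built into our CR manifolds since $V \oplus \overline{V} = \ker\omega$ is a contact structure.

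\medskip

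The main obstacle I expect is Step 3 — specifically, upgrading the \emph{local} CR diffeomorphism to a genuine \emph{holomorphic} extension to an open piece of $\mathbb{C}^2$ so that Cartan's linearity theorem for the ball can be applied. One must be careful that the CR function $F$ and the companion function $g$ extend holomorphically to the \emph{same} one-sided neighborhood and that the resulting holomorphic map is a local biholomorphism carrying a piece of $\partial B^2$ to a piece of $\partial B^2$; the orientation/side-of-the-hypersurface bookkeeping and the invocation of the reflection/extension principle are the delicate points, whereas the algebra in Steps 1--2 is routine contact-form manipulation. An alternative to Step 3 that avoids complex-analytic extension is to differentiate the defining PDE repeatedly and show the map is determined by its 2-jet at a point, then match that 2-jet with the 2-jet of a suitable element of $\mathrm{PU}(2,1)$ (which acts with enough freedom on 2-jets by homogeneity of $S^3$) — but this still ultimately rests on a unique-continuation input of comparable difficulty.
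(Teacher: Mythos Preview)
The paper does not prove this theorem at all: it is stated as a known Liouville-type result (immediately after the sentence ``The CR automorphism group of $S^3$ is the group of biholomorphisms of the unit ball, that is $\mathrm{PU}(2,1)$ which verifies a Liouville type theorem'') and is then used to justify the $(\mathrm{PU}(2,1),S^3)$-structure description of spherical CR manifolds. So there is no in-paper argument to compare against; the theorem functions as background.

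Your outline is the standard classical route (Poincar\'e for $S^3$, later Tanaka and Alexander in higher dimensions), and the overall architecture is correct: reduce to a local CR diffeomorphism of $\mathcal{H}$, observe that the complex component is a CR function and that the contact condition $\varphi^\ast\omega=\mu\omega$ pins down the real component, then holomorphically extend via the Lewy (or Baouendi--Tr\'eves) one-sided extension for CR functions on a strictly pseudoconvex hypersurface, and finally invoke the linearity of local biholomorphisms of $\partial B^2$. You have correctly located the only genuine difficulty in Step~3. Two comments: first, Step~2 as written is a bit garbled (the sentence about ``$Zf$ being a multiple of $Z(\text{first component})$'' says nothing, and the appeal to the Rumin/Garfield--Lee operators is decorative rather than load-bearing here; all you need is $\overline{Z}(w\circ\varphi)=0$ and the contact equation); second, in Step~3 the theorem you want to name for the final step is Alexander's theorem (a biholomorphism from an open piece of $\partial B^2$ to an open piece of $\partial B^2$ extends to an automorphism of $B^2$), which already packages the reflection/side-bookkeeping you flag as delicate. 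The alternative jet-determination approach you mention is the Cartan--Chern--Moser route and is equally standard; either path would constitute an acceptable proof of a result the paper simply quotes.
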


Thus, a spherical CR manifold can be seen as a manifold endowed with a $(\mathrm{PU}(2,1), \h \cup\{\infty\})$ (or a $(\mathrm{PU}(2,1), S^3)$) structure. That is, a manifold with an atlas of charts with values in $\h$ and transition functions in $\mathrm{PU}(2,1)$. Such an atlas will be called a {\it spherical CR atlas}. Having a spherical CR atlas on a manifold will allow us to define $(p,q)$-forms with cocyle relations.

We now provide a description of forms and operators $\mathrm{d}', \mathrm{d}'', \mathrm{D}', \mathrm{D}''$ and $\mathrm{D}^+$ in open subsets of $\h$. For a general local description on strictly pseudoconvex CR manifolds, one can refer to e.g. \cite{AL}. First notice that $(Z,\overline Z, T=\dt)$ is a frame of $T\h$ with dual coframe $(\mathrm{d}z, \mathrm{d}\overline z, \omega)$ and easy computations show :
\[ [Z,\overline Z]=2iT, \ [Z,T]=[\overline Z, T] = 0, \ \mathrm{d} \omega = 2i\mathrm{d}z\wedge \mathrm{d} \overline z.\]
Let $\alpha$ be a $(p,q)$-form in an open subset $U$ of $\h$. Then, $\alpha$ is of the following form according to the value of $(p,q)$:
\[p=q=0: \ \alpha = f;\]
\[p=1,\ q=0: \ \alpha = [f\mathrm{d}z]_\omega;\]
\[p=0, \ q=1: \ \alpha = [f\mathrm{d}\overline z]_\omega;\]
\[p=q=1: \ \alpha = f \mathrm{d}\overline z \wedge \omega;\]
\[p=2, \ q=0: \ \alpha = f\mathrm{d}z\wedge \omega;\]
\[p=2,\ q=1: \ \alpha = f \mathrm{d}z\wedge\mathrm{d}\overline z \wedge \omega\]
where $f\in C^\infty(U,\C)$. Then, if $f \in C^\infty(U,\C)$ :
\[\mathrm{d}'f = [Zf\mathrm{d}z]_\omega \text{ , } \mathrm{d}'' f = [\overline Z f \mathrm{d}\overline z]_\omega,\]
\[ \mathrm{d}'(f\mathrm{d}\overline z\wedge \omega) = Zf\mathrm{d}z\wedge\mathrm{d}\overline z\wedge \omega \text{ and}\]
\[\mathrm{d}''(f\mathrm{d}z\wedge \omega) = -\overline Z f\mathrm{d}z\wedge\mathrm{d}\overline z\wedge \omega.\]
For $E^{1,0}$, let $f\mathrm{d}z + h \omega$ with $f,h \in C^\infty(U,\C)$ be a representative of $[f\mathrm{d}z]_\omega$. Then we have
\[\mathrm{d} (f\mathrm{d}z+h\omega) = \left(2ih-\overline Z f\right)\mathrm{d}z\wedge\mathrm{d}\overline z + \left(Zh-Tf\right)\mathrm{d}z\wedge\omega+ \overline Z h \mathrm{d}\overline z \wedge \omega.\]
Thus, $\mathrm{d} (f\mathrm{d}z+h\omega) \in F^2$ if and only if $2ih = \overline Z f$. Consequently,
\[ \Dr [f\mathrm{d}z]_\omega = \mathrm{d} \left(f\mathrm{d}z + \frac{1}{2i}\overline Z f \omega\right) = \left(\frac{1}{2i}Z\overline Z f - Tf\right)\mathrm{d}z\wedge\omega + \frac{1}{2i} \overline Z^2 \mathrm{d}\overline z \wedge \omega.\]
Decomposing according to the bidegree gives
\[\Dr ' [f\mathrm{d}z]_\omega = \left(\frac{1}{2i}Z\overline Z f - Tf\right)\mathrm{d}z\wedge\omega \text{ and}\]
\[\Dr '' [f\mathrm{d}z]_\omega =\frac{1}{2i} \overline Z^2 f \mathrm{d}\overline z \wedge \omega.\]
Doing the same thing for $E^{0,1}$, we find
\[\Dr' [f\mathrm{d}\overline z]_\omega = \left(\frac{1}{2i}\overline Z Zf + Tf\right)\omega\wedge\mathrm{d}\overline z \text{ and}\]
\[\Dr ^+ [f\mathrm{d}\overline z]_\omega = \frac{1}{2i} Z^2 f \omega \wedge \mathrm{d}z.\]

Now, let $M$ be a manifold endowed with a spherical CR atlas $(U_i, \varphi_i : U_i \rightarrow U_i ' \subset \h)_i\in I$. Let $\alpha$ be a $(1,0)$-form on $M$. Since the $\varphi_i$ are CR diffeomorphisms, 
\[ \alpha = \varphi_i ^\ast [f_i \mathrm{d}z]_\omega \text{ on $U_i$, where $f_i \in C^\infty (U_i ' , \C)$.}\]
If $(U_j, \varphi_j)$ is another chart, $\alpha = \varphi_j ^\ast [f_j \mathrm{d}z]_\omega \text{ on $U_j$, with $f_j \in C^\infty (U_i ' , \C)$}$ and so
\[[f_i \mathrm{d}z]_\omega = [(f_j\circ g_{j,i})Zg_{j,i}^1 \mathrm{d}z + (f_j\circ g_{j,i})Tg_{j,i}^1 \omega]_\omega\]
on $\varphi_i (U_i \cap U_j)$ where $g_{j,i} (g_{j,i}^1, g_{j,i}^2)= \varphi_j \circ \varphi_i ^{-1}$. Consequently :
\[f_i = (f_j\circ g_{j,i})Zg_{j,i}^1 \text{ on $\varphi_i (U_i \cap U_j)$.}\]
Doing the same thing for $(0,1)$-forms on $M$, we can provide the following definition of $(p,q)$-forms ($p+q = 1$).

\begin{definition}

A $(1,0)$-form on a spherical CR manifold $M$ is a collection of functions $f_i : U_i ' \longrightarrow \C$, where $(U_i, \varphi_i : U_i \longrightarrow U_i ')_i$ is an atlas of $M$, such that
\[ \left( f_j \circ g_{j,i} \right) Zg_{j,i} ^1 = f_i \text{ on $\varphi_i (U_i  \cap U_j) $, where $g_{j,i} = \varphi_j \circ \varphi_i ^{-1}$}.\]
A $(0,1)$-form on a spherical CR manifold $M$ is a collection of functions $f_i : U_i ' \longrightarrow \C$, where $(U_i, \varphi_i : U_i \longrightarrow U_i ')_i$ is an atlas of $M$, such that
\[ \left( f_j \circ g_{j,i} \right) \overline{Zg_{j,i} ^1} = f_i \text{ on $\varphi_i (U_i  \cap U_j) $, where $g_{j,i} = \varphi_j \circ \varphi_i ^{-1}$}.\]
\end{definition}

Since the operators $\Dr ', \Dr ''$ and $\Dr ^+$ commute with CR maps, we can define:

\begin{definition}
Let $M$ be a spherical CR manifold.
\begin{enumerate}
\item{Let $\alpha = (f_i)_i$ be a $(1,0)$-form on M. Then $\alpha$ is said:
\begin{itemize}
\item{$D'$-closed if for every $i$, 
\[Z\overline Z f_i = 2i Tf_i,\]}
\item{$D''$-closed if for every $i$,
\[\overline Z^2 f_i = 0.\]}
\end{itemize}}
\item{Let $\alpha = (f_i)_i$ be a $(0,1)$-form on M. Then $\alpha$ is said:
\begin{itemize}
\item{$D'$-closed if for every $i$, 
\[i\overline Z Z f_i = 2 Tf_i,\]}
\item{$D^+$-closed if for every $i$,
\[ Z^2 f_i = 0.\]}
\end{itemize}}
\end{enumerate}
\end{definition}

\subsection{General definition and geometry induced by a quadratic differential}\label{sec2.1}
Remark \ref{rk2} indicated that we can see $(1,0)$-forms on a strictly pseudoconvex CR manifold as smooth sections of the line bundle $\wedge^{1,0}$ which is defined as the quotient of the space of forms vanishing on the conjugate of the CR distribution by the space of forms vanishing on the contact distribution. The remark also indicated that this line bundle is naturally isomorphic to the dual of the CR distribution. It is then tempting to see $\wedge^{1,0}$ as analogue to the canonical bundle of a Riemann surface. With that in mind, the definition of a quadratic differential is natural.

\begin{definition}[Quadratic differential]
A quadratic differential on a strictly pseudoconvex CR manifold is a smooth section of the line bundle $\wedge^{1,0} \otimes \wedge^{1,0}$.
\end{definition}

As for $(1,0)$-forms, one can understand quadratic differentials on a spherical CR manifold in terms of cocycle relations. Let $M$ be a manifold endowed with a spherical CR atlas $(U_i, \varphi_i : U_i \rightarrow U_i ' \subset \h)_{i\in I}$. On every open subset of $\h$, a quadratic differential is the class modulo the contact form of a form of type $f\mathrm{d}z^2$. We write such classes $[f\mathrm{d}z^2]_\omega$. Then, if $q$ is a quadratic differential on $M$, in every chart $(U_i,\varphi_i)$ we have
\[q =\varphi_i ^\ast[q_i\mathrm{d}z^2]_\omega\]
with $q_i \in C^\infty(U_i ', \C)$. Since $q$ is globally defined on $M$, if $U_i\cap U_j \neq \emptyset$ we must have
\[ [q_i\mathrm{d}z^2]_\omega = g_{j,i}^\ast [q_j\mathrm{d}z^2]_\omega \text{ on $\varphi_i(U_i\cap U_j)$ where $g_{j,i} = \varphi_j\circ\varphi_i^{-1}$.}\]
That is
\[[q_i\mathrm{d}z^2]_\omega = [(q_j\circ g_{j,i})\left(Zg_{j,i}^1\right)^2\mathrm{d}z^2]_\omega \text{ on $\varphi_i(U_i\cap U_j)$ where $g_{j,i} = \varphi_j\circ\varphi_i^{-1}$}\]
which legitimates the following definition.

\begin{definition}[Quadratic differential on a spherical CR manifold] 
A quadratic differential on a spherical CR manifold $M$ with spherical CR atlas $(U_i, \varphi_i)_{i\in I}$ is a collection of smooth complex valued functions $q_i \in C^{\infty}(\varphi_i (U_i))$ satisfying for every $i,j \in I$
\[q_i = (q_j\circ g_{j,i})\left(Zg_{j,i}^1\right)^2 \text{ on $\varphi_i(U_i\cap U_j)$ where $g_{j,i} = \varphi_j\circ\varphi_i^{-1}$.}\]
\end{definition}

The first geometric data induced by a quadratic differential on a strictly pseudoconvex manifold can be defined by analogy with the Riemann surface case. However, since quadratic differentials are defined up to a contact form, we shall restrict to {\it Legendrian curves} (that is curves everywhere tangent to the contact distribution).

\begin{definition}[Trajectories, length] 
Let $(M,V)$ be a strictly pseudoconvex CR manifold and $q$ be a (non-zero) quadratic differential on $M$. Let also $\gamma : I \rightarrow M$ be a Legendrian parametrized curve on $M$
\begin{enumerate}
\item $\gamma$ is called a
\begin{itemize}
\item horizontal trajectory of $q$ if $q(\gamma '(s)) > 0$ for all $s \in I$;
\item vertical trajectory of $q$ if $q(\gamma ' (s)) < 0$ for all $s\in I$.
\end{itemize}
We denote $\mathrm{Hor}_q$ and $\mathrm{Ver}_q$ the sets of horizontal and vertical trajectories of $q$.

\item $\sqrt{|q|}$ is a length element that can be integrated along any legendrian curve. Thus, we call the $q$-length of the curve $\gamma$, the number
\[ l_q (\gamma) = \int_\gamma \sqrt{|q|}.\]

\end{enumerate}
\end{definition}

We will discuss about known examples of maps dilating trajectories of quadratic differentials in section \ref{sec3} but we define here what it means to dilate trajectories of quadratic differentials.

\begin{definition}[Contact map preserving/dilating trajectories of quadratic differentials]
Let $(M,V)$ and $(M',V')$ be strictly pseudoconvex CR manifolds and $f:M\rightarrow M'$ a contact map. Let $q$ (resp. $q'$) be a quadratic differential on $(M,V)$ (resp. on $(M',V')$). We say that $f$ :
\begin{enumerate}
\item preserves $(\mathrm{Hor}_q, \mathrm{Hor}_{q'})$ if $f(\mathrm{Hor}_q) \subset \mathrm{Hor}_{q'}$;
\item dilates $(\mathrm{Hor}_q, \mathrm{Hor}_{q'})$ if $f$ preserves $(\mathrm{Hor}_q, \mathrm{Hor}_{q'})$ and if there is a positive number $\lambda$ such that
\[l_{q'} (f(\delta)) = \lambda l_q (\delta) \text{ for all $\delta \in \mathrm{Hor}_q$.}\]
\end{enumerate}
\end{definition}

\subsection{Operators}\label{sec2.2}

We introduce here several operators on quadratic differentials on a spherical CR manifold. One of the main features of holomorphic quadratic differentials on a Riemann surface is the existence of {\it natural coordinates} : a non-zero holomorphic quadratic differential on a Riemann surface $q$ is, away from a critical point, locally given by $\mathrm{d}w^2$ for a holomorphic coordinate $w$. 

Let now $q$ be a non-zero quadratic differential on a spherical CR manifold $M$. We wish to find conditions on $q$ under which around any point $p$ where $q(p) \neq 0$ there is a chart $(U,\varphi)$ such that 
\[ q = \varphi^\ast[\mathrm{d}z^2]_\omega.\]
We will define $3$ operators $\Dr'_2$, $\Dr''_2$ and $B_2$ on quadratic differentials such that the following holds.

\begin{proposition}\label{th221}
Let $q$ be a non-zero quadratic differential on a spherical CR manifold $M$ such that $\Dr' _2 q = 0$, $\Dr''_2 q=0$ and $B_2 q =0$. Then, around every point $p\in M$ where $q(p)\neq 0$, there is a chart $(U,\varphi)$ such that $q=\varphi^\ast [\mathrm{d}z^2]_\omega$ on $U$. 

Moreover, two such charts differ only by translation and sign, that is by $(z,t) \mapsto \left(\pm z+z_0, t+t_0 + 2\Im(\pm z \overline z_0)\right)$. 
\end{proposition}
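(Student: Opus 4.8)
The plan is to reduce the statement to a computation in a single Heisenberg chart: the first two hypotheses will replace $q$ locally by the square of the differential of a CR function, the third will promote that function to the complex part of a CR chart, and the rigidity claim will follow from the Liouville theorem. So fix $p$ with $q(p)\neq 0$ and a chart $\varphi_i$ of the atlas around $p$; then $q$ is represented on $\varphi_i(U_i)\subset\h$ by a smooth function $q_i$ with $q_i(\varphi_i(p))\neq 0$, and after shrinking we may assume $\varphi_i(U_i)$ is a ball on which $q_i$ is nowhere zero. It suffices to produce a CR diffeomorphism $\psi$ from a neighbourhood $\Omega$ of $\varphi_i(p)$ onto an open subset of $\h$ with $(Z\psi^1)^2=q_i$ on $\Omega$: setting $\varphi:=\psi\circ\varphi_i$ on $\varphi_i^{-1}(\Omega)$, the cocycle relation for quadratic differentials then forces the representative of $q$ in the chart $\varphi$ to be the constant $1$, that is, $q=\varphi^\ast[\mathrm{d}z^2]_\omega$; and $\varphi$ is an admissible chart, since by the Liouville theorem $\psi$ is the restriction of an element of $\mathrm{PU}(2,1)$, so every transition $\varphi\circ\varphi_j^{-1}=\psi\circ(\varphi_i\circ\varphi_j^{-1})$ again lies in $\mathrm{PU}(2,1)$.

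Since $\Dr'_2 q=0$ and $\Dr''_2 q=0$, by construction of these operators there is, on a smaller neighbourhood of $\varphi_i(p)$, a CR function $w$ with $q_i=(Zw)^2$. Concretely, picking a smooth branch $g=\sqrt{q_i}$ on the ball, the two hypotheses amount to saying that the $(1,0)$-form $[g\,\mathrm{d}z]_\omega$ is $\Dr'$- and $\Dr''$-closed, hence by the local exactness of Rumin's complex it equals $\Dp w$ for a CR function $w$ with $Zw=g$. In particular $Zw$ is nowhere zero near $\varphi_i(p)$, and since $w$ is CR we have $\mathrm{d}w=Zw\,\mathrm{d}z+Tw\,\omega$ with $Tw=\tfrac1{2i}\,\overline Z(Zw)$ by the bracket relation between $Z$ and $\overline Z$.

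It remains to find a real function $s$ near $\varphi_i(p)$ such that $\psi:=(w,s)$ is a CR diffeomorphism, for then $\psi^1=w$ and $(Z\psi^1)^2=q_i$. One checks that $\psi$ is a CR diffeomorphism as soon as $\psi^\ast\omega=\rho\,\omega$ for a nowhere-zero $\rho$: then $\psi$ preserves $\ker\omega$, and since $w$ is CR with $Zw\neq 0$ one computes $\psi_\ast Z=(Zw)Z$, so $\psi$ preserves $V$, while $\psi^\ast(\mathrm{d}z\wedge\mathrm{d}\overline z\wedge\omega)=\rho\,|Zw|^2\,\mathrm{d}z\wedge\mathrm{d}\overline z\wedge\omega\neq 0$ shows $\psi$ is a local diffeomorphism. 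Now $\psi^\ast\omega=\mathrm{d}s+\eta$ with $\eta=-i\overline w\,\mathrm{d}w+iw\,\mathrm{d}\overline w$, so one must solve $\mathrm{d}s=\rho\,\omega-\eta$; imposing $\mathrm{d}(\rho\,\omega-\eta)=0$, its $\mathrm{d}z\wedge\mathrm{d}\overline z$-component forces $\rho=|Zw|^2$ (nowhere zero near $\varphi_i(p)$), and the two remaining components, conjugate to one another, become --- after substituting $q_i=(Zw)^2$ --- precisely the condition $B_2 q=0$ (this is how $B_2$ is defined). Hence $\rho\,\omega-\eta$ is closed, Poincar\'e's lemma on a ball yields $s$, and $\psi=(w,s)$ is the desired CR diffeomorphism, which proves the existence part.

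For the rigidity statement, if $q=\varphi^\ast[\mathrm{d}z^2]_\omega=\widetilde\varphi^\ast[\mathrm{d}z^2]_\omega$ on a connected domain then $h:=\widetilde\varphi\circ\varphi^{-1}$ is a CR diffeomorphism of open subsets of $\h$ with $h^\ast[\mathrm{d}z^2]_\omega=[\mathrm{d}z^2]_\omega$, that is, $(Zh^1)^2\equiv 1$; by connectedness $Zh^1$ is the constant $1$ or $-1$, and since $h^1$ is CR the bracket relation forces $Th^1=0$, whence $\mathrm{d}h^1=\pm\,\mathrm{d}z$ and $h^1=\pm z+z_0$. Plugging this into $h^\ast\omega=\rho\,\omega$ and solving as in the previous paragraph determines $h^2$ up to an additive real constant $t_0$, and one recovers exactly the transformation $(z,t)\mapsto(\pm z+z_0,\,t+t_0+2\Im(\pm z\,\overline z_0))$ of the statement. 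The one genuinely computational point I expect to be delicate is in the construction of $s$: checking that, once $\rho=|Zw|^2$ is imposed, the leftover closedness condition is literally $B_2 q=0$ under $q_i=(Zw)^2$, and that the two-valuedness of $\sqrt{q_i}$ is harmless --- which is exactly why two natural coordinates must differ by a sign on top of a Heisenberg translation. The remaining steps are routine given the Liouville theorem, the local exactness of Rumin's complex, and the defining property of $\Dr'_2$ and $\Dr''_2$.
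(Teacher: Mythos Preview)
Your proof is correct and follows essentially the same route as the paper: pass to a Heisenberg chart, use $\Dr'_2 q=\Dr''_2 q=0$ together with local exactness of Rumin's complex to write $q_i=(Zw)^2$ for a CR function $w$, then use $B_2 q=0$ to show the relevant real $1$-form is closed and integrate to get the real coordinate $s$, with the inverse function theorem giving a local diffeomorphism. Your derivation of $\rho=|Zw|^2$ from the contact condition $\psi^\ast\omega=\rho\,\omega$ is a slight repackaging of the paper's argument (which instead reads off the system from the CR condition on $s+i|w|^2$), but the content is identical; you also make explicit, via the Liouville theorem, that the resulting chart is compatible with the spherical CR atlas, a point the paper leaves implicit.
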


Let $q$ be a quadratic differential on a spherical CR manifold $M$. As stated, we want $q$ to be locally of the form $\varphi^\ast[\mathrm{d}z^2]_\omega$ for a chart $(U,\varphi)$. It means that we want $q = \left(\Dp \varphi_1\right)^2$ modulo the contact form $\varphi^\ast \omega$ on $U$ where $\varphi = (\varphi_1, \varphi_2) : U\rightarrow U'\subset\h$ is a CR diffeomorphism. Thus, we first want to express conditions for a quadratic differential to be locally the square of the differential of a CR function or equivalently the pull-back by a CR function of the quadratic form $\mathrm{d}w^2$ in $\C$. 

The corresponding question for $(1,0)$-forms can be answered by the annulment of the operator $\Dr$. More precisely, if $\alpha \in E^{1,0}$ is in the kernel of $\Dr$, then, since Rumin complex is locally exact, there is locally a function $f$ such that $\alpha = \Dp f$. Moreover, since $\alpha \in E^{1,0}$, $f$ must be a CR function. Consequently, the first step is to find an operator $\Dr_2$ (split in two operators $\Dr_2 '$ and $\Dr_2 ''$), that extends $\Dr$ to quadratic differentials.

To construct these operators $\Dr_2'$ and $\Dr_2 ''$, we first define them on quadratic differentials on open subsets of the Heisenberg group. Let $[q\mathrm{d}z^2]_\omega$ be a quadratic differential on an open subset $U$ of $\h$. Define :
\[ \Dr '_{2,\h} [q\mathrm{d}z^2]_\omega = \left(2qZ\overline Z q - Zq\overline Zq -4iqTq\right)[\mathrm{d}z^3]_\omega \otimes \omega\wedge\mathrm{d}z \text{ and}\]
\[\Dr ''_{2,\h} [q\mathrm{d}z^2]_\omega = \left(2q\overline Z ^2q - \left(\overline Z q\right)^2\right)[\mathrm{d}z^3]_\omega \otimes \omega\wedge \mathrm{d}\overline z.\]

\begin{remark}\label{rk3}
The idea behind these operators is to apply $\Dr$ to the square root of a quadratic differential $q$. Since $\Dr$ is a second order operator, we can multiply "$\Dr \sqrt{q}$" by "$q^{\frac{3}{2}}$" to obtain an operator acting on $q$ and not on its square root. After scale of constants and splitting in two, it gives $\Dr '_{2,\h}$ and $\Dr''_{2,\h}$.

An idea of algebraic definition would be to define $\Dr'_2 (\alpha\otimes\beta) = \alpha\otimes \beta\otimes \alpha \otimes \Dr' \beta$ on pure tensors. However, one cannot immediately extend this to quadratic differentials since it is not a linear operator but a quadratic one. Thus, one needs to define the bilinear operator associated to $\Dr '_2$ in order to extend the definition from pure tensors to quadratic differentials.
\end{remark}

We can now use $\Dr'_{2,\h}$ and $\Dr''_{2,\h}$ to define the following.

\begin{definition}\label{def223}
Let $M$ be a spherical CR manifold $M$ with atlas $(U_i, \varphi_i)_i$. Then the following operators are well-defined (where $\Gamma (E)$ refers to the space of smooth sections of any line bundle $E$ over $M$ and $\wedge^{p,q}$ for $p+q = 2$ are those from remark \ref{rk2}):
\[\begin{array}{ccccc}
\Dr_2 ' : & \Gamma\left(\left(\wedge^{1,0}\right)^{\otimes 2}\right) & \longrightarrow & \Gamma\left(\left(\wedge^{1,0}\right)^{\otimes 3} \otimes \wedge^{2,0}\right) & \\
& q & \longmapsto & \varphi_i^\ast\Dr'_{2,\h}[q_i\mathrm{d}z^2]_\omega & \text{ on $U_i$ where $q=\varphi_i ^\ast[q_i\mathrm{d}z^2]_\omega$}
\end{array}\]
and 
\[\begin{array}{ccccc}
\Dr_2 '' : & \Gamma\left(\left(\wedge^{1,0}\right)^{\otimes 2}\right) & \longrightarrow & \Gamma\left(\left(\wedge^{1,0}\right)^{\otimes 3} \otimes \wedge^{1,1}\right) & \\
& q & \longmapsto & \varphi_i^\ast\Dr''_{2,\h}[q_i\mathrm{d}z^2]_\omega & \text{ on $U_i$ where $q=\varphi_i ^\ast[q_i\mathrm{d}z^2]_\omega$.}
\end{array}\]
\end{definition}

\begin{proof}
We have to prove that this definition doesn't depend on the choice of a chart $(U_i,\varphi_i)$. For that, it is enough to show that $\Dr_{2,\h} '$ and $\Dr_{2,\h} ''$ commute to all CR diffeomorphisms between open sets of the Heisenberg group. We will prove it only for $\Dr_{2,\h} ''$, the other one being proved the same way. So, let $[q\mathrm{d}z^2]_\omega$ be a quadratic differential on an open subset $U$ of $\h$ and $g=(g_1,g_2) : U\rightarrow V$ be a CR diffeomorphism. We begin with establishing several formulas for the derivatives of $g_1$. Since $g$ is a CR diffeomorphism, $g_1$ is a CR function. From that, we deduce
\begin{eqnarray}\label{eq3}
\overline Z Z g_1 & = &2iTg_1.
\end{eqnarray}
Moreover, $g_2 + i|g_1|^2$ is also a CR function. It leads to
\[ \overline Z g_2= -ig_1\overline{Zg_1} \text{ and, since $g_2$ is real valued, } Zg_2 = i\overline g_1 Zg_1,\]
which gives, using $[\overline Z, Z]=2iT$, 
\[ |Zg_1|^2 = Tg_2 -i\overline g_1 Tg_1 +ig_1 \overline{Tg_1}.\]
Thus, we have
\begin{eqnarray}\label{eq4}
\overline Z\left(|Zg_1|^2\right) = T\overline Z g_2 - i Tg_1\overline {Zg_1} + i\overline g_1 \overline{TZg_1} =-2iTg_1\overline{Zg_1}.
\end{eqnarray}

Since $g^\ast [q\mathrm{d}z^2]_\omega = [(q\circ g)\left(Zg_1\right)^2 \mathrm{d}z^2]_\omega$, we need to compute\\ $2(q\circ g) \left(Zg_1\right)^2 \overline Z^2\left((q\circ g)\left(Zg_1\right)^2\right)$ and $\left(\overline Z\left((q\circ g)\left(Zg_1\right)^2\right)\right)^2$. First, using equation \ref{eq3}, we have :
\[ \overline Z \left((q\circ g ) \left(Zg_1\right)^2\right) = \overline Z q(g)Zg_1|Zg_1|^2 + 4i(q\circ g)Zg_1Tg_1.\]
Using equation \ref{eq4}, it leads to :
\begin{eqnarray*}
2(q\circ g) \left(Zg_1\right)^2 \overline Z^2 \left((q\circ g ) \left(Zg_1\right)^2\right) &= &((2q\overline Z^2 q)\circ g )\left(Zg_1\right)^2 |Zg_1|^4\\
 &&+ 8i((q\overline Zq)\circ g)\left(Zg_1\right)^2Tg_1|Zg_1|^2\\
 & &-8(q^2\circ g)\left(Zg_1\right)^2\left(Tg_1\right)^2\\
 & = & ((2q\overline Z^2 q -\left(\overline Z q\right)^2)\circ g )\left(Zg_1\right)^2 |Zg_1|^4\\
 & &+ \left(\overline Z\left((q\circ g)\left(Zg_1\right)^2\right)\right)^2.
\end{eqnarray*}
Consequently,
\begin{eqnarray*}
\Dr_{2,\h} '' g^\ast[q\mathrm{d}z^2]_\omega & = & \left( \left(2q\overline Z ^2 q - \left(\overline Zq\right)^2\right)\circ q\right)\left(Zg_1\right)^3\overline{Zg_1}|Zg_1|^2 [\mathrm{d}z^3]_\omega \otimes \omega \wedge \mathrm{d}\overline z\\
& = & g^\ast\Dr_{2,\h} '' [q\mathrm{d}z^2]_\omega.
\end{eqnarray*}
\end{proof}

\begin{proposition}\label{prop2.3.4}
Let $q$ be a non-zero quadratic differential on a spherical CR manifold $M$ such that $\Dr'_2q=0$ and $\Dr''_2 q =0$. Then, every point $p\in M$ where $q(p)\neq 0$ has a neighborhood in which $q=f^\ast \mathrm{d}w^2$ for a complex valued CR function $f$.
\end{proposition}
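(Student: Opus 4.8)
The plan is to pass to a single chart, extract a square root of $q$, and recognize the hypotheses as the condition that this square root is a $\Dr$-closed $(1,0)$-form. Fix $p\in M$ with $q(p)\neq 0$ and a chart $\varphi=(\varphi_1,\varphi_2):U\to U'\subset\h$ around $p$, so that $q=\varphi^\ast[q_0\mathrm{d}z^2]_\omega$ with $q_0\in C^\infty(U',\C)$. Shrinking $U$, I may assume $U'$ is a simply connected neighbourhood of $\varphi(p)$ on which $q_0$ never vanishes; then $q_0$ admits a smooth, nowhere vanishing square root $r\in C^\infty(U',\C)$, $r^2=q_0$ (unique up to sign, and the sign will be irrelevant below).

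Next I would unwind the hypotheses through the chart. By Definition \ref{def223}, $\Dr'_2q=0$ and $\Dr''_2q=0$ mean exactly that the scalar coefficients in $\Dr'_{2,\h}[q_0\mathrm{d}z^2]_\omega$ and $\Dr''_{2,\h}[q_0\mathrm{d}z^2]_\omega$ vanish on $U'$, i.e.
\[ 2q_0Z\overline Zq_0-Zq_0\,\overline Zq_0-4iq_0Tq_0=0 \quad\text{and}\quad 2q_0\overline Z^2q_0-(\overline Zq_0)^2=0. \]
Substituting $q_0=r^2$ and using $Zq_0=2rZr$, $\overline Zq_0=2r\overline Zr$, $Tq_0=2rTr$, $Z\overline Zq_0=2Zr\,\overline Zr+2rZ\overline Zr$ and $\overline Z^2q_0=2(\overline Zr)^2+2r\overline Z^2r$, the first-order terms cancel in pairs and, dividing by the nowhere vanishing factor $4r^3$, both identities reduce to
\[ Z\overline Zr=2iTr \quad\text{and}\quad \overline Z^2r=0. \]
By the local expressions for $\Dr'$ and $\Dr''$ on $E^{1,0}$ recalled above, these are precisely the conditions that the $(1,0)$-form $[r\,\mathrm{d}z]_\omega$ be $D'$- and $D''$-closed, i.e. $\Dr[r\,\mathrm{d}z]_\omega=0$ in $F^2$. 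Since Rumin's complex is locally exact, after shrinking $U'$ once more there is $h\in C^\infty(U',\C)$ with $\Dp h=[r\,\mathrm{d}z]_\omega$; decomposing $\Dp h=[Zh\,\mathrm{d}z]_\omega+[\overline Zh\,\mathrm{d}\overline z]_\omega$ by bidegree and comparing with the purely $(1,0)$ right-hand side forces $\overline Zh=0$ (so $h$ is a CR function) and $Zh=r$. Finally $f:=h\circ\varphi$ is a CR function on $U$, and since $\varphi^\ast$ commutes with $\mathrm{d}'$ and with tensor products, the quadratic differential $f^\ast\mathrm{d}w^2=(\mathrm{d}'f)^{\otimes 2}$ satisfies
\[ f^\ast\mathrm{d}w^2=\bigl(\varphi^\ast\mathrm{d}'h\bigr)^{\otimes 2}=\varphi^\ast\bigl[(Zh)^2\mathrm{d}z^2\bigr]_\omega=\varphi^\ast[q_0\mathrm{d}z^2]_\omega=q \]
on $U$, which is the claim.

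The only genuinely computational point, and the one I expect to be the main obstacle, is verifying that the two hypothesis identities collapse to $Z\overline Zr=2iTr$ and $\overline Z^2r=0$ after the substitution $q_0=r^2$: this is where the precise numerical coefficients in the definitions of $\Dr'_{2,\h}$ and $\Dr''_{2,\h}$ are used, and one must keep careful track of the cancellations and of the fact that $r\neq 0$ is needed to divide out the factor $4r^3$. Everything else is bookkeeping with charts together with the already-available local exactness of Rumin's complex.
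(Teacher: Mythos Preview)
Your proof is correct and follows exactly the same route as the paper's: pass to a chart, take a square root $r$ of the coefficient, check that $\Dr[r\,\mathrm{d}z]_\omega=0$, invoke local exactness of Rumin's complex to produce a CR primitive, and pull back. The only difference is that you actually carry out the substitution $q_0=r^2$ and verify the cancellation down to $Z\overline Zr=2iTr$ and $\overline Z^2r=0$, whereas the paper simply asserts ``it is then easy to see that $\Dr\beta=0$'' (the heuristic having been sketched in Remark~\ref{rk3}); your computation is correct and the factor $4r^3$ drops out exactly as you say.
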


\begin{proof}
Let $U$ be a small neighborhood of $p$ in which $q$ doesn't vanish and $q = \varphi^\ast[\alpha\mathrm{d}z^2]_\omega$ where $\varphi : U\rightarrow U' \subset \h$ is a CR diffeomorphism and $\alpha \in C^\infty (U')$. Then, $\Dr'_{2,\h} [\alpha\mathrm{d}z^2]_\omega = 0$, $\Dr''_{2,\h} [\alpha\mathrm{d}z^2]_\omega = 0$ and $\alpha$ doesn't vanish on $U'$. Since $\alpha$ doesn't vanish on $U'$, we can choose a square root of $\alpha$, $\sqrt{\alpha}$ and consider $\beta = [\sqrt{\alpha}\mathrm{d}z]_\omega$. It is then easy to see that $\Dr \beta = 0$ so that $\beta =\Dp f$, locally around $\varphi(p)$ for a CR function $f$. Thus,
\[ [\alpha \mathrm{d}z^2]_\omega = f^\ast \mathrm{d}w^2\]
and
\[ q = \varphi^\ast[\alpha\mathrm{d}z^2]_\omega = (f \circ \varphi)^\ast \mathrm{d}w^2.\]
\end{proof}

Now, we want to get from a CR function to the complex part of a CR map. For that, we shall already answer the following question : let $f: U\subset\h \rightarrow \C$ be a CR function, under what condition on $f$ can we find a real valued function $h : U\rightarrow \R$ such that $(f,h)$ is a CR map ?

It is about solving a system of partial differential equations. Assume we have such a function $h$. Then $h+i|f|^2$ is a CR function. Consequently
\[\overline Z h = -if\overline {Zf} \text{ and, since $h$ is real, } Zh = i\overline f Zf.\]
Thus, we also have
\[ Th = |Zf|^2 + i\overline f Tf - i f \overline{Tf}.\]
To sum up, we need to solve the system :
\[\begin{cases}
Zh = i\overline f Zf\\
\overline Z h = -if\overline{Zf}\\
Th = |Zf|^2 + i\overline f Tf - i f \overline{Tf}
\end{cases}.\]
Since we are only interested by a local result, we just have to see when the real $1$-form
\[\beta = i\overline f Zf\mathrm{d} z -if\overline{Zf}\mathrm{d}\overline z +\left(|Zf|^2 + i\overline f Tf - i f \overline{Tf}\right)\omega\]
is closed. After computations, we see that $\beta$ is closed if and only if 
\[ \overline Z \left(|Zf|^2\right) = -2iTf\overline {Zf}.\]
What is interesting here is that, assuming $Zf$ doesn't vanish and denoting $Zf=\sqrt{q}$, the last equation is equivalent to 
\[ \overline Z\left(|q|^2\right) + \overline q \overline Z q = 0.\]
It then gives an operator 
\[B_{2,\h} [q\mathrm{d}z^2]_\omega = \left(\overline Z\left(|q|^2\right) + \overline q \overline Z q\right)\omega\wedge\mathrm{d}\overline z \otimes \omega.\]
It is remarkable that this operator also commute to CR diffeomorphisms (it can be proved easily with the same kind of computations than the ones from the proof of Definition \ref{def223}). Thus, we can define :
\begin{definition}
Let $M$ be a spherical CR manifold $M$ with atlas $(U_i, \varphi_i)_i$. Then the following operator is well-defined (where $\Gamma (E)$ refers to the space of smooth sections of any line bundle $E$ over $M$ and $A$ is the line bundle defined in remark \ref{rk1}):
\[\begin{array}{ccccc}
B_2  : & \Gamma\left(\left(\wedge^{1,0}\right)^{\otimes 2}\right) & \longrightarrow & \Gamma\left(\wedge^{1,1} \otimes A\right) & \\
& q & \longmapsto & \varphi_i^\ast B_{2,\h}[q_i\mathrm{d}z^2]_\omega & \text{ on $U_i$ where $q=\varphi_i ^\ast[q_i\mathrm{d}z^2]_\omega$}
\end{array}.\]
\end{definition}

Now that the operators are defined, we are in position to prove Proposition \ref{th221}.
\begin{proof}[Proof of Proposition \ref{th221}]
Let $q$ be a non-zero quadratic differential on a spherical CR manifold $M$ and take a small neighborhood $U$ of a point $p\in M$ where $q$ doesn't vanish and where $q=\varphi^\ast [\alpha\mathrm{d}z^2]_\omega$ for a chart $(U,\varphi)$ and a function $\alpha\in C^\infty(\varphi(U))$. Then, $\Dr_{2,\h} ' [\alpha\mathrm{d}z^2]_\omega = 0$ and $\Dr_{2,\h} '' [\alpha\mathrm{d}z^2]_\omega = 0$ so, according to Proposition \ref{prop2.3.4}, there is locally a CR function $f$ such that $\alpha = \left(Zf\right)^2$. Now, $B_{2,\h} [\left(Zf\right)^2\mathrm{d}z^2]_\omega = 0$ ensures that the real form $i\overline f Zf\mathrm{d} z -if\overline{Zf}\mathrm{d}\overline z +\left(|Zf|^2 + i\overline f Tf - i f \overline{Tf}\right)\omega$ is closed. Consequently, there is locally a real valued function $h$ such that $(f,h)$ is a CR map. Finally, since $Zf(\varphi(p))\neq 0$, the inverse function theorem ensures that $(f,h)$ is a diffeomorphism on a small neighborhood $V$ of $\varphi(p)$. Thus,
\[ [\alpha\mathrm{d}z^2]_\omega = g^\ast [\mathrm{d}z^2]_\omega\]
where $g=(f,h)$ and so, $((g\circ \varphi)^{-1} (V), g\circ \varphi)$ is a chart around $p$ where
\[ q = (g\circ\varphi)^\ast[\mathrm{d}z^2]_\omega.\]
For the second part, one just has to notice that if $g=(g_1,g_2)$ is a CR diffeomorphisms between open subset of the Heisenberg group satisfying $\left(Zg_1\right)^2 = 1$, then $g:(z,t) \mapsto (\pm z+z_0, t+t_0\pm 2\Im(z\overline z_0))$.
\end{proof}

\subsection{Extension to forms of positive order}\label{sec2.3}

Let $k\ge 2$ be an integer. We call {\it form of degree $k$} on a strictly pseudoconvex CR manifold any smooth section of the line bundle $\left(\wedge^{1,0}\right)^{\otimes k}$. As for quadratic differentials, we can understand those in terms of cocycle relations in the spherical CR case.

\begin{definition}[$k$-form on a spherical CR manifold]
A form of degree $k$ on a spherical CR manifold $M$ with spherical CR atlas $(U_i,\varphi_i : U_i \rightarrow U_i '\subset \h)_{i\in I}$ is a collection of smooth functions $\alpha_i \in C^{\infty}(U_i ')$ satisfying for every $i,j \in I$ 
\[\alpha_i = \left(\alpha_j \circ g_{j,i}\right)\left(Zg_{j,i}^1\right)^k \text{ on $\varphi_i(U_i\cap U_j)$ where $g_{j,i} = \varphi_j \circ \varphi_i^{-1}$.}\]
\end{definition}

The idea behind operators $\Dr_2 '$ and $\Dr_2 ''$ given in remark \ref{rk3} can be used to define operators on $k$-forms. Namely, let $[\alpha\mathrm{d}z^k]_\omega$ be a $k$-form on an open subset of $\h$ and define :
\[\Dr_{k,\h} ' [\alpha \mathrm{d}z^k]_\omega = \left(k\alpha Z\overline Z \alpha + (1-k)Z\alpha \overline Z \alpha -2ik\alpha T\alpha\right)[\mathrm{d}z^{2k-1}]_\omega \otimes \omega \wedge \mathrm{d}z \text{ and}\]
\[\Dr_{k,\h} '' [\alpha \mathrm{d}z^k]_\omega = \left(k\alpha \overline Z^2 \alpha + (1-k)\left(\overline Z\alpha\right)^2\right)[\mathrm{d}z^{2k-1}]_\omega \otimes \omega \wedge \mathrm{d}\overline z.\]
Then, one can easily verify that for every CR diffeomorphism $g$ we have
\[ \Dr_{k,\h} ' g^\ast = g^\ast \Dr_{k,\h} ' \text{ and } \Dr_{k,\h} '' g^\ast = g^\ast \Dr_{k,\h} ''.\]
Moreover, the operator $B_{2,\h}$ can also be extended to $k$-forms as :
\[B_{k,\h} [\alpha\mathrm{d}z^k]_\omega = \left(\overline Z\left(|\alpha|^2\right)+\overline \alpha \overline Z \alpha\right)\omega \wedge\mathrm{d}\overline z \otimes \omega^{k-1}\]
and we still have
\[ B_{k,\h} g^\ast =g^\ast B_{k,\h}\]
for every CR diffeomorphism $g$. Thus, $\Dr_{k,\h} '$, $\Dr_{k,\h}''$ and $B_{k,\h}$ can be used to define operators on $k$-forms on a spherical CR manifold.

\begin{definition}
Let $(U_i,\varphi_i)_i$ be a spherical CR atlas of a manifold $M$ and $\alpha$ be a $k$-form on $M$. Then the following operators are well define
\[\begin{array}{ccccc}
\Dr_{k} ' : &\Gamma\left(\left(\wedge^{1,0}\right)^{\otimes k}\right) & \longrightarrow & \Gamma \left(\left(\wedge^{1,0}\right)^{\otimes 2k-1} \otimes \wedge^{2,0}\right) & \\
& \alpha & \longmapsto & \varphi_i ^\ast \Dr_{k,\h} ' [\alpha_i \mathrm{d}z^k]_\omega & \text{where $\alpha = \varphi_i ^\ast[\alpha_i\mathrm{d}z^k]_\omega$,}
\end{array}\]
\[\begin{array}{ccccc}
\Dr_{k} '': &\Gamma\left(\left(\wedge^{1,0}\right)^{\otimes k}\right) & \longrightarrow & \Gamma \left(\left(\wedge^{1,0}\right)^{\otimes 2k-1} \otimes \wedge^{1,1}\right) & \\
& \alpha & \longmapsto & \varphi_i ^\ast \Dr_{k,\h} '' [\alpha_i \mathrm{d}z^k]_\omega & \text{where $\alpha = \varphi_i ^\ast[\alpha_i\mathrm{d}z^k]_\omega$}
\end{array}\]
and
\[\begin{array}{ccccc}
B_{k} : &\Gamma\left(\left(\wedge^{1,0}\right)^{\otimes k}\right) & \longrightarrow & \Gamma \left(\wedge^{1,1}\otimes A^{k-1}\right) & \\
& \alpha & \longmapsto & \varphi_i ^\ast B_k [\alpha_i \mathrm{d}z^k]_\omega & \text{where $\alpha = \varphi_i ^\ast[\alpha_i\mathrm{d}z^k]_\omega$;}
\end{array}\]
\end{definition}

Moreover, as for quadratic differentials, the following holds.
\begin{proposition}
Let $\alpha$ be a non-zero $k$-form on a spherical CR manifold such that $\Dr_k ' \alpha = 0$ and $\Dr_k '' \alpha = 0$. Then, every point $p\in M$ where $\alpha(p)\neq 0$ has a neighborhood in which 
\[ \alpha = f^\ast \mathrm{d}w^k\]
for a complex valued CR function $f$. 

In addition, if we also have $B_k \alpha = 0$, then around every point $p$ where $\alpha$ doesn't vanish, there is a chart $(U,\varphi)$ in which 
\[\alpha = \varphi^\ast[\mathrm{d}z^k]_\omega.\]
\end{proposition}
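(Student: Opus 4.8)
The plan is to localise the statement inside a Heisenberg chart and then repeat, essentially verbatim, the arguments used for quadratic differentials in the proofs of Propositions~\ref{prop2.3.4} and~\ref{th221}; the point is that the coefficients $k$, $1-k$, $-2ik$ in $\Dr_{k,\h}'$, $\Dr_{k,\h}''$ and the expression defining $B_{k,\h}$ are precisely those that make these operators compatible with extracting a $k$-th root of the local coefficient. So fix $p$ with $\alpha(p)\neq 0$ and a chart $(U,\varphi)$, $\varphi\colon U\to U'\subset\h$, with $\alpha=\varphi^\ast[\alpha_0\,\mathrm{d}z^k]_\omega$, shrinking $U$ so that $U'$ is a ball on which $\alpha_0\in C^\infty(U')$ does not vanish. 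Since $\Dr_k'$, $\Dr_k''$ are defined chart-wise through $\Dr_{k,\h}'$, $\Dr_{k,\h}''$, the hypotheses read $\Dr_{k,\h}'[\alpha_0\,\mathrm{d}z^k]_\omega=0$ and $\Dr_{k,\h}''[\alpha_0\,\mathrm{d}z^k]_\omega=0$ on $U'$. As $\alpha_0$ is nowhere zero on a ball it admits a smooth $k$-th root $f$ (so $f^k=\alpha_0$), and we set $\beta=[f\,\mathrm{d}z]_\omega$, a $(1,0)$-form on $U'$.

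The heart of the argument is the Leibniz identity: substituting $\alpha_0=f^k$ and using the product rule one obtains
\[
k\alpha_0\,\overline Z^2\alpha_0+(1-k)(\overline Z\alpha_0)^2 = k^2 f^{2k-1}\,\overline Z^2 f,
\]
\[
k\alpha_0\,Z\overline Z\alpha_0+(1-k)\,Z\alpha_0\,\overline Z\alpha_0-2ik\alpha_0\,T\alpha_0 = k^2 f^{2k-1}(Z\overline Z f-2iTf).
\]
Since $f$ does not vanish, $\Dr_{k,\h}'[\alpha_0\,\mathrm{d}z^k]_\omega=0$ and $\Dr_{k,\h}''[\alpha_0\,\mathrm{d}z^k]_\omega=0$ are therefore equivalent to $Z\overline Z f=2iTf$ and $\overline Z^2 f=0$, i.e., by the explicit description of $\Dr'$ and $\Dr''$ on $[f\,\mathrm{d}z]_\omega$, to $\Dr'\beta=0$ and $\Dr''\beta=0$. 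Because $\Dr^+$ vanishes on $E^{1,0}$, this gives $\Dr\beta=0$; local exactness of Rumin's complex then provides a smooth $f_1$ near $\varphi(p)$ with $\beta=\Dp f_1$. The $(0,1)$-part of $\Dp f_1$ must vanish, so $\overline Z f_1=0$, i.e., $f_1$ is CR, and $Zf_1=f$. Hence $[\alpha_0\,\mathrm{d}z^k]_\omega=[(Zf_1)^k\mathrm{d}z^k]_\omega=(\Dp f_1)^{\otimes k}=f_1^\ast\mathrm{d}w^k$, so $\alpha=\varphi^\ast f_1^\ast\mathrm{d}w^k=(f_1\circ\varphi)^\ast\mathrm{d}w^k$ with $f_1\circ\varphi$ a CR function; this proves the first assertion.

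For the second assertion I would follow the proof of Proposition~\ref{th221}. Now $B_k\alpha=0$ gives $B_{k,\h}[\alpha_0\,\mathrm{d}z^k]_\omega=0$; writing $\alpha_0=(Zf_1)^k=u^k$ and expanding, one gets $\overline Z(|u^k|^2)+\overline{u^k}\,\overline Z(u^k)=k|u|^{2(k-1)}(2\overline u\,\overline Z u+u\,\overline{Zu})$, so (as $u\neq 0$) the vanishing of $B_{k,\h}[\alpha_0\,\mathrm{d}z^k]_\omega$ is exactly the condition $B_{2,\h}[(Zf_1)^2\mathrm{d}z^2]_\omega=0$, namely the closedness of the real $1$-form $i\overline{f_1}Zf_1\,\mathrm{d}z-if_1\overline{Zf_1}\,\mathrm{d}\overline z+(|Zf_1|^2+i\overline{f_1}Tf_1-if_1\overline{Tf_1})\omega$. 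Closedness together with simple-connectedness of a smaller $U'$ yields a real $h$ with $(f_1,h)$ a CR map; since $Zf_1(\varphi(p))\neq 0$, the inverse function theorem makes $g:=(f_1,h)$ a CR diffeomorphism near $\varphi(p)$. Then $[\alpha_0\,\mathrm{d}z^k]_\omega=g^\ast[\mathrm{d}z^k]_\omega$, hence $\alpha=(g\circ\varphi)^\ast[\mathrm{d}z^k]_\omega$ near $p$, and $(U,g\circ\varphi)$, suitably restricted, is the desired chart.

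The only delicate point is the pair of Leibniz identities above together with their $B$-analogue: these are routine but bookkeeping-heavy generalizations of the $k=2$ computations already in the paper, and one must keep track of the nonvanishing prefactors $f^{2k-1}$ and $|u|^{2(k-1)}$ so that the equivalences with $\Dr'\beta=0$, $\Dr''\beta=0$ and with the closedness condition are genuine equivalences rather than mere implications; everything else is an immediate transcription of the quadratic-differential case.
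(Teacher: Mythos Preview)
Your proof is correct and follows precisely the route the paper intends: the paper gives no explicit proof of this proposition, only the indication ``as for quadratic differentials, the following holds,'' and your argument is exactly the expected transcription of the proofs of Propositions~\ref{prop2.3.4} and~\ref{th221} from $k=2$ to general $k$. The two Leibniz identities and the $B_k$ expansion you record are the key bookkeeping facts, and they are correct as stated; with those in hand the reduction to $\Dr\beta=0$ and to the closedness of the auxiliary real $1$-form goes through verbatim.
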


\section{Quasiconformal mappings preserving trajectories of quadratic differentials}\label{sec3}
\subsection{Quasiconformal mappings and moduli of curve families in the Heisenberg group}
Before getting to the examples, we briefly present the theory of quasiconformal maps in the Heinseberg group, for details, refer to \cite {KR1, KR2}. The Heisenberg group is endowed with a left invariant metric 
\[ d_{\h} (p,q) := \| p^{-1} q \|_{\h}\]
where $\| (z,t) \|_{\h} := \left( |z|^4 + t^2 \right)^{\frac{1}{4}}$ is the Heisenberg norm. By analogy with the classical case, a homeomorphism $f : \Omega \longrightarrow \Omega '$ between domains of $\h$ is called quasiconformal if 
\[ H(p,f) := \underset{r\to 0} {\lim \sup} \ \frac{\underset{d_{\h} (p,q) = r } {\max}\  d_{\h} (f(p),f(q))}{\underset{d_{\h} (p,q) = r } {\min}\  d_{\h} (f(p),f(q))}, \ p\in \Omega\]
is uniformly bounded. We say that $f$ is $K$-quasiconformal if $\|H(.,f)\|_{L^{\infty}} \le K$. As in the case of the complex plane, we have equivalent analytic definitions of quasiconformality. A sufficiently regular ($C^2$ is enough) quasiconformal map between domains of $\h$ has to be a contact map for the contact structure induced by the form $\omega = \mathrm{d}t - i\overline z \mathrm{d}z + i z\mathrm{d}\overline z$, meaning that $f^* \omega = \lambda \omega$ for a nowhere vanishing real function $\lambda$. Moreover, denoting $f = (f_1 , f_2 )$ with $f_1$ the complex part of the map and $f_2$ the real one, then, if $f$ is an orientation-preserving quasiconformal map, it satisfies a system of PDEs quite similar to Beltrami equation. Indeed, in that case, there is a complex valued function $\mu \in L^{\infty}$ (called Beltrami coefficient) with $\| \mu \|_{L^\infty} < 1$ such that
\[ \overline Z f_1 = \mu Z f_1 \text{ and } \overline Z \left( f_2 + i|f_1 |^2 \right) = \mu Z \left( f_2 + i|f_1 |^2 \right) \text{ a.e.} \]
Define the distortion function of the map $f$ by
\[ K(p,f) := \frac{1 + |\mu (p)|}{1-|\mu (p)|} = \frac{ |Zf_1 (p) | + |\overline Z f_1 (p)|}{|Zf_1 (p) | - |\overline Z f_1 (p)|}\]
for $p \in \Omega$ where it makes sense and the maximal distortion of $f$ by 
\[K_f := \underset{p\in \Omega} \esssup K(p,f).\]
It is known that a conformal (i.e. $1$-quasiconformal) map $f : \Omega \longrightarrow \Omega '$ is the restriction to $\Omega$ of the action of an element of $SU(2,1)$ (see \cite [p.~337]{KR1} for the smooth case and \cite[p.~869] {Cap} for the general one).
\newline

By analogy with the case of the complex plane, in order to understand extremal properties of a quasiconformal map between domains of $\h$, we look at its behaviour on a well chosen family of Legendrian curves that foliates the domain. First, let $\gamma = (\gamma_1 , \gamma_2) : I \longrightarrow \h$ be a $C^1$ curve. $\gamma$ is Legendrian if and only if
\[ \dot \gamma_2 (s) = -2 \Im (\overline \gamma_1 (s) \dot \gamma_1 (s)) \text{ for every $s\in I$.}\]
Then, let $\Gamma$ be a family of Legendrian curves in a domain $\Omega$ of $\h$. Denote $\adm (\Gamma)$ the set of measurable Borel functions $\rho : \Omega \longrightarrow [0,\infty]$ such that
\[ \int_\gamma \rho \mathrm{d} l = \int_a ^b \rho(\gamma (s)) |\dot \gamma_1 (s)| \mathrm{d} s \ge 1\]
for every curve $\gamma : ]a,b[ \longrightarrow \Omega$ in $\Gamma$. Elements of $\adm (\Gamma)$ are called {\it densities}. Define the {\it modulus} of $\Gamma$ by
\[ M(\Gamma) = \underset{\rho\in \adm (\Gamma)}\inf \int_\Omega \rho^4 \mathrm{d} L^3\]
where $\mathrm{d} L^3$ is the Lebesgue measure on $\R ^3$. A density $\rho_0$ is said {\it extremal for $\Gamma$} if
\[ M(\Gamma) = \int_\Omega \rho_0 ^4 \mathrm{d} L^3.\]
Then, we have the following theorem (see e.g. \cite [p.~177]{BFP}).

\begin{theorem}\label{th3.1.1}
Let $f : \Omega \longrightarrow \Omega '$ be a quasiconformal map between domains in $\h$ and $\Gamma$ a family of Legendrian curves in $\Omega$. Then
\begin{enumerate}
\item{for all $\widetilde \rho \in \adm (f(\Gamma))$,
\[M(\Gamma) \le \int_{\Omega '} K(f^{-1}(x), f)^2\widetilde \rho ^4 (x) \mathrm{d} L^3 (x),\]}
\item{for all $\rho \in \adm (\Gamma)$,
\[ M(f(\Gamma)) \le \int_\Omega K(p, f)^2 \rho^4 (p) \mathrm{d} L^3 (p),\]}
\item{ and so
\[ \frac{1}{K_f ^2} M(\Gamma) \le M(f(\Gamma)) \le K_f ^2 M(\Gamma).\]}
\end{enumerate}
\end{theorem}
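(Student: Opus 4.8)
The plan is to prove parts (1) and (2) by transporting admissible densities through $f$, reducing each inequality to a pointwise algebraic identity combined with the area formula, and then to deduce (3) by inserting the bound $K(\cdot,f)\le K_f$ and passing to infima.

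I would start by collecting the analytic facts about quasiconformal maps of $\h$ that make the argument work (all standard; see \cite{KR1,KR2}): such an $f$ belongs to the horizontal Sobolev class, is Pansu-differentiable almost everywhere, satisfies the Beltrami system $\overline Z f_1=\mu\, Zf_1$ a.e.\ with $\|\mu\|_{L^\infty}<1$, enjoys Lusin's condition $(N)$ together with its inverse, and has Jacobian $J_f=(|Zf_1|^2-|\overline Zf_1|^2)^2$ a.e.\ with respect to $\mathrm{d}L^3$; this last point follows from $f^*\omega=\lambda\omega$ together with $f^*(\omega\wedge\mathrm{d}\omega)=\lambda^2\,\omega\wedge\mathrm{d}\omega$ and $\lambda=|Zf_1|^2-|\overline Zf_1|^2$. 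The second ingredient is the chain rule along Legendrian curves: for $u\in C^1$ and any Legendrian $\gamma$ one has $\frac{d}{ds}(u\circ\gamma)=Zu(\gamma)\,\dot\gamma_1+\overline Zu(\gamma)\,\overline{\dot\gamma_1}$, because the $\frac{\partial u}{\partial t}$--contributions cancel exactly by virtue of $\dot\gamma_2=-2\Im(\overline\gamma_1\dot\gamma_1)$; applying this with $u=f_1$ and invoking Fuglede's lemma, the identity together with local absolute continuity of $f\circ\gamma$ holds for every $\gamma\in\Gamma$ outside a subfamily of zero modulus, which does not affect $M(\Gamma)$.

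For (1), let $\widetilde\rho\in\adm(f(\Gamma))$ be Borel. The Beltrami system gives $\bigl|\frac{d}{ds}(f_1\circ\gamma)\bigr|=|Zf_1(\gamma)|\,\bigl|\dot\gamma_1+\mu(\gamma)\overline{\dot\gamma_1}\bigr|\le|Zf_1(\gamma)|\,(1+|\mu(\gamma)|)\,|\dot\gamma_1|$, so for modulus-almost every curve the function $\rho(p):=\widetilde\rho(f(p))\,|Zf_1(p)|\,(1+|\mu(p)|)$ satisfies $\int_\gamma\rho\,\mathrm{d}l\ge\int_{f\circ\gamma}\widetilde\rho\,\mathrm{d}l\ge 1$; hence $\rho$ may be used as an admissible density for $\Gamma$ up to a negligible curve family. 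Therefore $M(\Gamma)\le\int_\Omega\rho^4\,\mathrm{d}L^3=\int_\Omega\widetilde\rho(f(p))^4\,|Zf_1(p)|^4(1+|\mu(p)|)^4\,\mathrm{d}L^3(p)$. Now $|\overline Zf_1|=|\mu|\,|Zf_1|$ gives $J_f=|Zf_1|^4(1-|\mu|^2)^2$, so $|Zf_1|^4(1+|\mu|)^4=J_f\cdot\bigl(\tfrac{1+|\mu|}{1-|\mu|}\bigr)^2=J_f\,K(p,f)^2$, and the area formula for the substitution $x=f(p)$ yields exactly $M(\Gamma)\le\int_{\Omega'}K(f^{-1}(x),f)^2\,\widetilde\rho^4(x)\,\mathrm{d}L^3(x)$.

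Part (2) is the mirror image: from $\bigl|\frac{d}{ds}(f_1\circ\gamma)\bigr|\ge|Zf_1(\gamma)|\,(1-|\mu(\gamma)|)\,|\dot\gamma_1|$ one checks that $\widetilde\rho(x):=\rho(f^{-1}(x))\big/\bigl(|Zf_1|(1-|\mu|)\bigr)(f^{-1}(x))$ may be used as an admissible density for $f(\Gamma)$ (again up to a zero-modulus family), whence $M(f(\Gamma))\le\int_{\Omega'}\widetilde\rho^4\,\mathrm{d}L^3=\int_\Omega\rho(p)^4\,\frac{J_f(p)}{|Zf_1(p)|^4(1-|\mu(p)|)^4}\,\mathrm{d}L^3(p)=\int_\Omega K(p,f)^2\rho^4(p)\,\mathrm{d}L^3(p)$ by the same algebra, since $J_f/\bigl(|Zf_1|^4(1-|\mu|)^4\bigr)=\bigl(\tfrac{1+|\mu|}{1-|\mu|}\bigr)^2=K(p,f)^2$. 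Finally, for (3) one estimates $K(f^{-1}(x),f)^2\le K_f^2$ in (1) and takes the infimum over $\widetilde\rho\in\adm(f(\Gamma))$ to obtain $\frac{1}{K_f^2}M(\Gamma)\le M(f(\Gamma))$, and estimates $K(p,f)^2\le K_f^2$ in (2) and takes the infimum over $\rho\in\adm(\Gamma)$ to obtain $M(f(\Gamma))\le K_f^2M(\Gamma)$.

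The main obstacle is not the algebra but the regularity bookkeeping behind these two ingredients: one must know that quasiconformal maps of $\h$ are regular enough for the Beltrami system and for the area formula with Jacobian $(|Zf_1|^2-|\overline Zf_1|^2)^2$ to be available, and --- more delicately --- one needs Fuglede's lemma to guarantee the chain rule along modulus-almost-every Legendrian curve and to justify discarding zero-modulus families when transporting densities back and forth. Once these are granted, the proof is precisely the density construction above together with the pointwise identities relating $|Zf_1|^4(1\pm|\mu|)^4$, $J_f$ and $K(\cdot,f)^2$, and a single change of variables.
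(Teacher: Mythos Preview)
The paper does not prove this theorem: it is quoted as a known result with a reference to \cite[p.~177]{BFP}. Your argument is correct and is precisely the standard one found in that literature --- transport an admissible density through $f$ via the pointwise dilatation bound on $|\tfrac{d}{ds}(f_1\circ\gamma)|$, then convert the resulting integral using the Jacobian identity $J_f=|Zf_1|^4(1-|\mu|^2)^2$ and the algebraic relation $|Zf_1|^4(1\pm|\mu|)^4=J_f\,K(\cdot,f)^{\pm 2}$. Your handling of the regularity issues (Pansu differentiability, Lusin~$(N)$, Fuglede's lemma for the exceptional curve family) is also exactly what is needed and is what the cited sources supply.
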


Let $\Omega, \Omega'$ be domains in $\h$ and $\mathcal F$ be a class of quasiconformal maps from $\Omega$ to $\Omega '$. Let $f_0$ be an element of $\mathcal F$. We say that $f_0$ minimizes the maximal distortion in $\mathcal F$ if
\[ K_{f_0} ^2 = \underset{f\in \mathcal F} \min \ K_f ^2\]
and $f_0$ minimizes the mean distortion in $\mathcal F$ for the density $\rho_0$ if
\[ \int_\Omega K(p,f_0)^2 \rho_0 ^4 (p) \mathrm{d} L^3 (p) = \underset{f \in \mathcal F} \min \int_ \Omega K(p,f)^2 \rho_0 ^4 (p) \mathrm{d} L^3 (p).\]

Using quasiconformal mappings in the Heisenberg group, one can define the Teichmüller space of a spherical CR manifold (see \cite{Wan}) so that understanding quasiconformal mappings minimizing the maximal distortion in a class of quasiconformal mappings (for instance an isotopy class) is of general interest. In that direction, Tang, in \cite{Tan}, constructed and prove uniqueness of a quasiconformal map with minimal distortion between CR circle bundles over flat tori. In \cite{BFP, BFP2, Tim}, the authors constructed and prove uniqueness of quasiconformal mappings minimizing a mean distortion functional between several domains of the Heisenberg group. As in the case of Riemann surfaces, these results were all obtained using well chosen family of curves. 

\subsection{Quasiconformal maps dilating trajectories of a quadratic differential}

As an end, we provide examples of quasiconformal maps between domains of $\h$ minimizing the maximal distortion or a mean distortion functional in a class of quasiconformal maps and explicit the quadratic differentials involved in those examples. We won't deal with links between extremal densities and quadratic differential annihilated by the operators defined in section \ref{sec2.2}, it would be a subject for other investigations in the future. We just notice that all quadratic differentials involved in the following examples annihilate both $\Dr_2 '$ and $\Dr_2 ''$ which comes from their definitions as pull-backs by CR functions of the quadratic differential $\mathrm{d}w^2$ on $\C$.

\begin{example}
First, we begin with an example of a quasiconformal map between domains of $\h$ with constant distortion. This example was given in \cite [p.~171]{BFP} and is constructed using the projection $P(z,t) = z$. Consider a rectangle $R_{a,b} = \{ z \in \C \ | \ 0< \Re (z) < a, \ 0 < \Im (z) < b\}$ with $a,b > 0$ foliated by vertical lines
\[\delta_x (s) = x+is\]
for $x \in ]0,a[$ and $s \in ]0,b[$. Lifting such a vertical line to a Legendrian curve in $\h$ gives a $1$-parameter family of curves 
\[\widetilde \delta _{x, t} (s) = (x+is, t-\Im ((x+is)^2)).\] In this way, we obtain a domain
\[ \Omega = \{ (z, t-\Im (z^2)) \in \h \ | \ z \in R_{a,b} , \ 0<t<c \}\]
where $c >0$. And the curves $\widetilde \delta _{x, t}$ are vertical trajectories for the quadratic differential 
\[q = [\mathrm{d} z^2]_\omega.\]
Horizontal trajectories for $q$ lying in $\Omega$ are the curves 
\[\widetilde \delta_{y,t} (s) = (s+iy, t+2sy)\]
for $s \in ]0,a[$, $y\in ]0,b[$ and $t+4ys \in ]0, c[$.
Consider another domain
\[ \Omega ' = \{ (z, t-\Im (z^2)) \in \h \ | \ z \in R_{a',b'} , \ 0<t<c' \}\]
with $a',b',c' >0$ such that
\[\frac{c'}{c} = \frac{a'b'}{ab}.\]
For $y \ge 0$, denote
\[ \partial \Omega_y = \{ (z,t-\Im (z^2)) \ | \ \Im (z) = y\}\]
and consider the class $\mathcal F$ of all quasiconformal mappings $f : \Omega \longrightarrow \Omega '$ which extend homeomorphically to the boundary with
\[ f(\partial\Omega_0) = \partial \Omega '_0 \text{ and } f (\partial \Omega_b) = \partial \Omega '_{b'}.\]
Then, the map $f_0$ defined by
\[ f_0 (x+iy, t) = \left( \frac{a'}{a} x + i \frac{b'}{b} y, \frac{a'b'}{ab} t\right)\]
is a quasiconformal map in $\mathcal F$ which minimizes the maximal distortion in $\mathcal F$. Moreover, it is clear that $f_0$ dilates the vertical and horizontal trajectories of the quadratic differential $[\mathrm{d}z^2]_\omega$ which is what is expected of a Teichm\"uller homeomorphism.
\end{example}

\begin{example}
The second example is the counterpart of the previous example using the projection $\Pi (z,t) = t+i|z|^2$ (which is a CR function). Consider again a rectangle $R_{a,b} = \{ z \in \C \ | \ 0< \Re (z) < a, \ 0 < \Im (z) < b\}$ with $a,b > 0$ foliated by vertical lines 
\[\delta_x (s) = x+is\]
for $x \in ]0,a[$ and $s \in ]0,b[$. Lifting such a vertical line to a Legendrian curve in $\h$ gives the $1$-parameter family of cylindrical radii:
\[ \widetilde \delta_{z, t} (s) = (sz,t)\]
with $|z|=1$, $0<t<a$ and $0<s<\sqrt b$. For $0<c<2\pi$, consider then the domain
\[C = \{ (z,t) \in \h \ | \ t+i|z|^2 \in R_{a,b}, \ 0<\arg (z)<c\}.\]
The curves $\widetilde \delta_{z,t}$ for $|z|=1$, $0<\arg(z)<c$ and $t \in ]0,a[$ are the vertical trajectories of the quadratic differential 
\[q = \Pi^\ast\mathrm{d}w^2 = [-4\overline z ^2 \mathrm{d} z^2]_\omega\]
on $C$. Horizontal trajectories for $q$ lying in $C$ are
\[ \widetilde \gamma_z (s) = \left( ze^{-i \frac{s}{2|z|^2}}, s \right)\]
for $0<|z|<\sqrt b$, $0<s<a$ and $0< \arg (z) -\frac{s}{2|z|^2} < c$. Consider another domain
\[ C' =\{ (z,t) \in \h \ | \ t+i|z|^2 \in R_{a',b'}, \ 0<\arg (z) < c' \}\]
with $a',b' > 0$, $0<c'<2\pi$ such that
\[ \frac{bc}{a} = \frac{b'c'}{a'}.\]
Denote
\[ \partial C_y = \{ (z,t) \in \overline{C} \ | \ |z|= \sqrt{y} \}\]
and consider the class $\mathcal F$ of all quasiconformal maps $f : C \longrightarrow C'$ which extend homeomorphically to the boundary with
\[ f(\partial C_0) = \partial C'_0 \text{ and } f(\partial C_b) = \partial C'_{b'}.\]
Then, the map $f_0$ defined by
\[ f_0 (z,t) = \left( \sqrt{\frac{b'}{b}} |z|e^{i \frac{a'b}{ab'} \arg (z)}, \frac{a'}{a} t\right)\]
is a quasiconformal map in $\mathcal F$ which minimizes the maximal distortion in $\mathcal F$.
\newline
Indeed, let $\Delta_0$ be the family of curves $\widetilde \delta_{z,t} : ]0,\sqrt b[ \longrightarrow C$ with $|z|=1$, $0<t<a$. Then, one can compute
\[ M(\Delta_0) = \frac{8 ac}{27 b}\]
with extremal density
\[ \rho_0 (z,t) = \frac{2}{3b^{\frac{1}{3}} |z|^{\frac{1}{3}}}.\]
Moreover, let $\Delta$ be the family of all Legendrian curves connecting $\partial C_0$ and $\partial C_b$. Then, since $\rho_0 \in \adm (\Delta)$, it is extremal for $\Delta$. Thus,
\[ M(\Delta) = \frac{8 ac}{27 b}.\]
Denote $\Delta '$ the corresponding family in $C'$. Then, by hypothesis on $\mathcal F$, for every $f \in \mathcal F$ one has
\[ f(\Delta) = \Delta '\]
and the third point in Theorem \ref{th3.1.1} gives for every $f \in \mathcal F$
\[ \frac{1}{K_f ^2} \frac{8 ac}{27 b} \le \frac{8 a' c' }{27 b'} \le K_f ^2 \frac{8 ac}{27 b}.\]
Thus, since $\frac{bc}{a} = \frac{b'c'}{a'}$, one has, for every $f \in \mathcal F$, 
\[ K_f ^2 \ge \max \left( \left(\frac{a'b}{ab'}\right)^2 , \left( \frac{ab'}{a'b} \right)^2 \right) = K_{f_0} ^2.\]
Meaning precisely that $f_0$ minimizes the maximal distortion in $\mathcal F$.
\newline
Moreover, we easily see that $f_0$ dilates the vertical trajectories of $[-4\overline z ^2 \mathrm{d} z^2]_\omega$ by a factor $\sqrt{\frac{b'}{b}}$ and the horizontal trajectories by a factor $\frac{a'}{a}$.
\end{example}

The next examples concern quasiconformal maps minimizing a mean distortion functional. They are the main objects considered in \cite {BFP, BFP2, Tim}. Most of what follows is a redraft of \cite{Tim} in terms of trajectories quadratic differentials.
\newline

We remind that a  {\it quadrilateral} $(Q,I_1,I_2)$ is the data of a subset $Q$ of $\C$ homeomorphic to a closed disk, with two distinguished parts of its boundary, $I_1$ and $I_2$ that are disjoint, connected, non-empty and not reduced to a single point. Such objects can be brought in the normal form of a rectangle $\overline {R_{a,b}}$ as in the previous example. That is, there is a homeomorphism $\phi : Q \rightarrow \overline{R_{a,b}}$ which is holomorphic in the interior of $Q$ and sends $I_1$ onto $\{0\}\times [0,b]$ and $I_2$ onto $\{a\} \times [0,b]$. Moreover, the rectangle and $\phi$ are unique up to dilation. A quasiconformal map between the quadrilaterals $(Q,I_1, I_2)$ and $(Q' , I_1 ', I_2 ')$ is a homeomorphism from $Q$ onto $Q'$ which is quasiconformal in the interior of $Q$ and sends $I_i$ onto $I_i '$ for $i=1,2$.

In what follows, we will consider two quadrilaterals $(Q, I_1, I_2)$ and $(Q', I_1 ', I_2 ')$ lying inside $\overline {\mathbb H}$ where $\mathbb H$ is the upper half-plane. On these quadrilaterals, there are natural holomorphic quadratic differentials to consider that come from uniformization by rectangles (unique up to dilation which doesn't change the mains features such as horizontal/vertical trajectories). With $\phi : Q\rightarrow \overline{R_{a,b}}$ and $\psi : Q' \rightarrow \overline{R_{a', b'}}$ denoting the uniformizations, these holomorphic quadratic differentials are :
\[q_Q = \phi ^\ast \mathrm{d}w^2 \text{ and } q_{Q'} = \psi ^\ast \mathrm{d}w^2.\]

On $Q$ (resp. $Q'$) we consider $\Gamma_Q$ (resp. $\Gamma_{Q'}$) the family of horizontal trajectories of $q_Q$ (resp. of $q_{Q'}$) lying in $Q$ (resp. in $Q'$) and connecting $I_1$ and $I_2$ (resp. connecting $I_1 '$ and $I_2 '$ ). 

\begin{lemma}
Every curve in $\Gamma_Q$ has $q_Q$-length $a$. Moreover, $\rho_Q = \frac{\sqrt{|q_Q|}}{a}$ is extremal for $\Gamma_Q$.
\end{lemma}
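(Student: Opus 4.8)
The plan is to reduce everything to the normal form of the quadrilateral, where the computations become transparent. Recall that $\phi : Q \to \overline{R_{a,b}}$ is a homeomorphism, holomorphic in the interior, sending $I_1$ onto $\{0\}\times[0,b]$ and $I_2$ onto $\{a\}\times[0,b]$, and $q_Q = \phi^\ast\,\mathrm{d}w^2$. Since $\phi$ is a biholomorphism in the interior, it maps horizontal trajectories of $q_Q$ precisely onto horizontal trajectories of $\mathrm{d}w^2$ on $R_{a,b}$, and it preserves $q$-length: for any curve $\delta$, $l_{q_Q}(\delta) = \int_\delta \sqrt{|q_Q|} = \int_{\phi(\delta)} \sqrt{|\mathrm{d}w^2|} = l_{\mathrm{d}w^2}(\phi(\delta))$. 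The horizontal trajectories of $\mathrm{d}w^2$ on $R_{a,b}$ connecting $\{0\}\times[0,b]$ to $\{a\}\times[0,b]$ are exactly the horizontal segments $s \mapsto s + iy$ for $s\in(0,a)$ and fixed $y\in(0,b)$, each of which has $\mathrm{d}w^2$-length $\int_0^a |\mathrm{d}s| = a$. Hence every curve in $\Gamma_Q$ has $q_Q$-length $a$.

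For the extremality of $\rho_Q = \sqrt{|q_Q|}/a$, first I would check admissibility: for any $\gamma \in \Gamma_Q$, $\int_\gamma \rho_Q\,\mathrm{d}l = \frac{1}{a}\int_\gamma \sqrt{|q_Q|} = \frac{1}{a}\cdot l_{q_Q}(\gamma) = \frac{a}{a} = 1$, so $\rho_Q \in \mathrm{adm}(\Gamma_Q)$. Then I would compute $\int_Q \rho_Q^4\,\mathrm{d}L^3$ — but here one must be careful about what measure is being used. In the quadrilateral setting $Q \subset \C$ is two-dimensional, so the relevant modulus is the classical (conformal, $2$-dimensional) modulus and the density integral is $\int_Q \rho_Q^2\,\mathrm{d}L^2$; transporting by $\phi$, which is conformal, this equals $\int_{R_{a,b}} \frac{1}{a^2}\,\mathrm{d}L^2 = \frac{ab}{a^2} = \frac{b}{a}$. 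To see this is the infimum over all admissible densities, I would pull back an arbitrary $\rho \in \mathrm{adm}(\Gamma_Q)$ to $R_{a,b}$ and apply the standard length–area (Cauchy–Schwarz) argument: for each $y\in(0,b)$, $1 \le \left(\int_0^a \rho(s+iy)\,\mathrm{d}s\right)^2 \le a\int_0^a \rho(s+iy)^2\,\mathrm{d}s$, then integrate over $y\in(0,b)$ to get $b \le a \int_{R_{a,b}} \rho^2$, i.e. $\int_{R_{a,b}}\rho^2 \ge b/a$, with equality for the constant density. Conformal invariance of the area integral of $\rho^2$ then transfers this back to $Q$.

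The main obstacle I anticipate is bookkeeping about the setting of the modulus rather than any genuine difficulty: the paper has set up the Heisenberg modulus $M(\Gamma) = \inf \int \rho^4\,\mathrm{d}L^3$ for Legendrian families in $\h$, but $\Gamma_Q$ lives in a planar quadrilateral $Q\subset\C$, so one is really invoking the classical $2$-dimensional theory of quadrilaterals and extremal length, where the conformal modulus of $(Q,I_1,I_2)$ is $a/b$ (or $b/a$ depending on orientation convention) and the extremal metric is the pull-back of the Euclidean metric on the normalizing rectangle. I would state this identification at the outset — that $\phi$ being conformal sends extremal metrics to extremal metrics and horizontal trajectories to horizontal segments — and then the two assertions (constant $q_Q$-length $a$, and extremality of $\rho_Q$) follow from the elementary computations on the rectangle together with the classical length–area argument. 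No delicate estimate is required; the only care needed is to keep the conformal (dimension $2$) framework straight and not conflate it with the Heisenberg (dimension $4$) modulus used elsewhere in the section.
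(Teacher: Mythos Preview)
Your proof is correct and follows essentially the same route as the paper's: reduce to the rectangle via $\phi$ and compute there. The paper's own argument is terser---it dispatches the extremality of $\rho_Q$ in one line by invoking ``the holomorphy of $q_Q$'' (i.e., the classical fact that the norm of a holomorphic quadratic differential furnishes the extremal metric for its trajectory family), whereas you unpack the length--area inequality explicitly and, helpfully, flag the distinction between the planar conformal modulus relevant to $\Gamma_Q$ and the Heisenberg $L^4$ modulus used elsewhere in the section.
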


\begin{proof}
Let $\gamma$ be a curve in $\Gamma_Q$. Then, we can parametrize $\gamma$ by $\gamma : s \mapsto \phi^{-1} (s+iy)$ for some $y\in [0,b]$ with $s\in [0,a]$. It is then clear that $q_Q (\gamma ')= \left(\phi '(\gamma (s))\gamma'(s)\right)^2 = 1$ so that
\[l_{q_Q} (\gamma) = \int_0 ^a \mathrm{d}s = a.\]
The extremality of $\rho_Q$ follows immediately from the holomorphy of $q_Q$.
\end{proof}

Consider then the {\it lifted quadrilaterals} by the map $\Pi : (z,t) \mapsto t+i|z|^2$ : $(\Omega, J_1, J_2)$ and $(\Omega ' , J_1 ' , J_2 ')$ where $\Omega = \Pi^{-1} (Q)$, $J_i = \Pi^{-1} (I_i)$, $\Omega ' = \Pi^{-1} (Q')$ and $J_i ' = \Pi^{-1} (I_i ')$. On these lifted quadrilaterals, we define the quadratic differentials 
\[q_\Omega = \Pi^\ast q_Q = [-4\overline z^2 q_Q (t+i|z|^2)]_\omega \text{ and }q_{\Omega '} = \Pi^{\ast}q_{Q'} = [-4\overline z^2 q_{Q'} (t+i|z|^2)]_\omega.\]
As for quadrilaterals, we call a quasiconformal map between the lifted quadrilaterals $(\Omega, J_1, J_2)$ and $(\Omega ', J_1 ', J_2 ')$ a homeomorphism from $\Omega$ onto $\Omega '$ which is quasiconformal in the interior of $\Omega$ and maps $J_i$ onto $J_i'$ for $i=1,2$.

Denote $\Gamma_\Omega$ (resp. $\Gamma_{\Omega '}$) the family of horizontal trajectories of $q_\Omega$ (resp. of $q_{\Omega '}$) that connect $J_1$ and $J_2$ (resp. $J_1 '$ and $J_2 '$). 

\begin{lemma}
Every curve in $\Gamma_\Omega$ has $q_\Omega$-length $a$.
\end{lemma}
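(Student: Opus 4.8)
The plan is to transfer the previous lemma about $\Gamma_Q$ through the lifting map $\Pi : (z,t) \mapsto t + i|z|^2$, exploiting that $\Pi$ is a CR submersion and that $q_\Omega = \Pi^\ast q_Q$ by construction. The key point is that $q$-length is computed by integrating $\sqrt{|q|}$ along a Legendrian curve, and this integral is invariant under pulling back by $\Pi$ in the appropriate sense.

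First I would take a curve $\gamma \in \Gamma_\Omega$. By definition it is a horizontal trajectory of $q_\Omega = \Pi^\ast q_Q$ connecting $J_1 = \Pi^{-1}(I_1)$ and $J_2 = \Pi^{-1}(I_2)$. I would show that its projection $\Pi \circ \gamma$ is a horizontal trajectory of $q_Q$ connecting $I_1$ and $I_2$, hence lies in $\Gamma_Q$: since $\gamma$ is Legendrian and $\Pi$ is CR, writing $\gamma = (\gamma_1,\gamma_2)$ one has $(\Pi\circ\gamma)' = Z\Pi(\gamma)\,\gamma_1' = -2i\overline{\gamma_1}\,\gamma_1'$ (using $\overline Z \Pi = 0$ and the Legendrian condition $\dot\gamma_2 = -2\Im(\overline\gamma_1\dot\gamma_1)$), and then $q_Q((\Pi\circ\gamma)') = q_\Omega(\gamma') > 0$, so $\Pi\circ\gamma$ is indeed a horizontal trajectory of $q_Q$. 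The second key step is the length identity: since $q_\Omega = \Pi^\ast q_Q$, one has $\sqrt{|q_\Omega|}\,$ evaluated on $\gamma'$ equals $\sqrt{|q_Q|}\,$ evaluated on $(\Pi\circ\gamma)'$ at every parameter, so
\[ l_{q_\Omega}(\gamma) = \int_\gamma \sqrt{|q_\Omega|} = \int_{\Pi\circ\gamma}\sqrt{|q_Q|} = l_{q_Q}(\Pi\circ\gamma). \]
Applying the previous lemma to $\Pi\circ\gamma \in \Gamma_Q$ gives $l_{q_Q}(\Pi\circ\gamma) = a$, hence $l_{q_\Omega}(\gamma) = a$.

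The main obstacle I anticipate is the bookkeeping around the pull-back identity $q_\Omega = \Pi^\ast q_Q$ when $q_Q$ is written explicitly as $[-4\overline z^2 q_Q(t+i|z|^2)]_\omega$: one must check carefully that evaluating this class on a Legendrian tangent vector $\gamma'(s)$ really does reproduce $q_Q$ evaluated on $(\Pi\circ\gamma)'(s)$, i.e. that $(Z\Pi)^2 = -4\overline z^2$ is exactly the factor that makes the chain rule work, and that the ambiguity modulo $\omega$ is irrelevant because $\gamma$ is Legendrian. This is a short computation once one unwinds the definitions of the quadratic-differential pull-back and of evaluation on Legendrian curves, but it is where all the care is needed; everything else is a direct transcription of the proof of the preceding lemma.
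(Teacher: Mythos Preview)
Your proposal is correct and follows essentially the same approach as the paper: project $\gamma$ via $\Pi$, verify using the Legendrian condition that the projection is a horizontal trajectory of $q_Q$ lying in $\Gamma_Q$, observe that $q_\Omega(\gamma') = q_Q\bigl((\Pi\circ\gamma)'\bigr)$ so the $q$-lengths coincide, and invoke the previous lemma. One minor slip: $Z\Pi = 2i\overline z$, so $(\Pi\circ\gamma)' = 2i\overline{\gamma_1}\dot\gamma_1$ rather than $-2i\overline{\gamma_1}\dot\gamma_1$; this is harmless since the sign disappears upon squaring, and the paper's computation $(2i\overline\gamma_1\dot\gamma_1)^2 = -4\overline\gamma_1^2\dot\gamma_1^2$ confirms your factor $(Z\Pi)^2 = -4\overline z^2$.
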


\begin{proof}
Let $\gamma = (\gamma_1 , \gamma_2)$ be a curve in $\Gamma_\Omega$. Then, there is a curve $\delta$ in $\Gamma_Q$ such that $\Pi \circ \gamma = \delta$. Moreover, we have:
\begin{eqnarray*}
q_\Omega (\gamma) & = & -4\overline \gamma_1 ^2 q_Q (\delta) (\dot\gamma_1)^2\\
& = & \left(2i \overline \gamma_1\dot \gamma_1\right)^2 q_Q (\delta)\\
& = & q_Q (\delta) (\dot \delta)^2\\
& = & 1.
\end{eqnarray*}
Consequently,
\[l_{q_\Omega} (\gamma) = l_{q_Q} (\delta) = a.\]
\end{proof}
Contrary to the complex plane case, $\frac{\sqrt{|q_\Omega|}}{a}$ has no reason to be extremal for $\Gamma_\Omega$. 
We say that $\Gamma_\Omega$ satisfies the {\it pull-back density condition} if $\frac{\sqrt{|q_\Omega|}}{a}$ is an extremal density for $\Gamma_\Omega$.

In this context, we have a general result concerning quasiconformal maps which dilate horizontal trajectories of quadratic differentials (see Proposition 3.17 in \cite{Tim} for a proof).

\begin{theorem}\label{th3.2.3}
Assume that $\Gamma_\Omega$ and $\Gamma_{\Omega '}$ satisfy the pull-back density condition. Let $f : (\Omega, J_1, J_2) \longrightarrow (\Omega ', J_1 ', J_2 ')$ be a $C^2$ quasiconformal map that dilates horizontal trajectories of $(q_\Omega, q_{\Omega '})$. Then, there is a $C^2$ quasiconformal map $g : Q \longrightarrow Q '$ such that
\[ \Pi \circ f = g \circ \Pi.\]
\end{theorem}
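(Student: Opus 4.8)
The plan is to show that a $C^2$ quasiconformal map $f=(f_1,f_2):(\Omega,J_1,J_2)\to(\Omega',J_1',J_2')$ that dilates horizontal trajectories of $(q_\Omega,q_{\Omega'})$ must descend through the projection $\Pi(z,t)=t+i|z|^2$. The key is to exploit the extremality hypothesis (the pull-back density condition) together with Theorem \ref{th3.1.1} to pin down $f$ well enough that $\Pi\circ f$ only depends on $\Pi$. First I would record the setup: $\Pi$ is a CR submersion, $q_\Omega=\Pi^\ast q_Q$, $q_{\Omega'}=\Pi^\ast q_{Q'}$, and the horizontal trajectories $\Gamma_\Omega$ project exactly onto $\Gamma_Q$ (each fibre of $\Pi|_{\Gamma_\Omega}\to\Gamma_Q$ being the circle of phases, by the computation in the lemma preceding the statement). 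The dilation hypothesis means $f(\Gamma_\Omega)\subset\Gamma_{\Omega'}$ and $l_{q_{\Omega'}}(f(\gamma))=\lambda\, l_{q_\Omega}(\gamma)$ for a fixed $\lambda>0$; since every curve in $\Gamma_\Omega$ has $q_\Omega$-length $a$ and every curve in $\Gamma_{\Omega'}$ has $q_{\Omega'}$-length $a'$, in fact $\lambda=a'/a$.

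The heart of the argument is an equality case in the modulus inequalities. Since $\Gamma_\Omega$ satisfies the pull-back density condition, $\rho_0=\sqrt{|q_\Omega|}/a$ is extremal for $\Gamma_\Omega$, so $M(\Gamma_\Omega)=\int_\Omega\rho_0^4\,\mathrm{d}L^3$, and likewise $M(\Gamma_{\Omega'})=\int_{\Omega'}(\sqrt{|q_{\Omega'}|}/a')^4\,\mathrm{d}L^3$. I would feed the density $\tilde\rho = \sqrt{|q_{\Omega'}|}/a'$ (or rather the density on $\Omega'$ obtained from the extremal one) into part (1) of Theorem \ref{th3.1.1}, and the extremal $\rho_0$ into part (2), to obtain that the distortion $K(p,f)$ is forced to be, up to the multiplicative constant coming from $\lambda$, constant along each horizontal trajectory — this is the standard Grötzsch-type argument: the chain of inequalities
\[ M(\Gamma_\Omega)\le \int K(f^{-1}(x),f)^2\tilde\rho^4 \le \text{(something)}\le M(\Gamma_\Omega)\]
collapses, forcing pointwise equality in the Cauchy--Schwarz / length-area step. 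Concretely, equality in $l_{q_{\Omega'}}(f(\gamma))=\lambda l_{q_\Omega}(\gamma)$ for \emph{every} horizontal trajectory, combined with extremality, forces the image of a horizontal trajectory to again be a horizontal trajectory traversed so that the $q_{\Omega'}$-length parameter is an affine function of the $q_\Omega$-length parameter with slope $\lambda$; equivalently, $q_{\Omega'}(f'(s))/q_\Omega(\gamma'(s))\equiv\lambda^2$ along horizontal trajectories.

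From there I would argue that $\Pi\circ f$ is constant on the fibres of $\Pi$. The fibres of $\Pi$ are the orbits of the rotation $(z,t)\mapsto(e^{i\theta}z,t)$; a horizontal trajectory of $q_\Omega$ through a point meets each nearby fibre in one point, and the family of all horizontal trajectories through the fibre over a given $w\in Q$ sweeps out $\Pi^{-1}$ of a neighbourhood of $w$ in $Q$. Using that $f$ carries $\Gamma_\Omega$ to $\Gamma_{\Omega'}$ while respecting the boundary identifications ($J_i\to J_i'$), the image under $\Pi\circ f$ of all the horizontal trajectories emanating from a single $\Pi$-fibre must be a single horizontal trajectory of $q_Q$ in $Q'$ — indeed their $\Pi$-projections are horizontal trajectories of $q_Q$, and two horizontal trajectories of $q_\Omega$ lying over the same horizontal trajectory of $q_Q$ (they differ only by the phase circle action) must go to trajectories of $q_{\Omega'}$ lying over the same trajectory of $q_{Q'}$, because $f$ preserves $q$-length along them and the endpoints on $J_1',J_2'$ determine the trajectory. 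Hence $\Pi\circ f$ is constant on each $\Pi$-fibre, so it factors as $\Pi\circ f = g\circ\Pi$ for a map $g:Q\to Q'$; quasiconformality and $C^2$-regularity of $g$ then follow from those of $f$ together with the fact that $\Pi$ is a $C^\infty$ submersion with local $C^\infty$ sections, and $g$ sends $I_i$ to $I_i'$ by construction.

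The main obstacle I expect is making the ``equality case'' step rigorous: one has to be careful that the modulus inequalities in Theorem \ref{th3.1.1} are applied to the right admissible densities and that extremality of $\sqrt{|q_\Omega|}/a$ and $\sqrt{|q_{\Omega'}|}/a'$ (the pull-back density condition, used for \emph{both} domains) genuinely forces the trajectory-by-trajectory equality $l_{q_{\Omega'}}(f(\gamma)) = \lambda\, l_{q_\Omega}(\gamma)$ with the \emph{same} $\lambda$ and, crucially, that $f$ maps horizontal trajectories to horizontal trajectories in a way compatible with the $\Pi$-fibration rather than merely mapping the curve family $\Gamma_\Omega$ to $\Gamma_{\Omega'}$ set-theoretically. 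Once the trajectory-wise rigidity is in hand, the descent $\Pi\circ f = g\circ\Pi$ is essentially bookkeeping with the $S^1$-action.
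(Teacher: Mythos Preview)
The paper does not actually prove this theorem in the text; it simply refers the reader to Proposition~3.17 of \cite{Tim}. So there is no in-paper argument to compare against, and your plan has to stand on its own.

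Your target is right --- show that $\Pi\circ f$ is constant on $\Pi$-fibres --- but the decisive step has a genuine gap. You assert that two horizontal trajectories of $q_\Omega$ lying over the same trajectory of $q_Q$ must be sent by $f$ to trajectories lying over the \emph{same} trajectory of $q_{Q'}$, ``because $f$ preserves $q$-length along them and the endpoints on $J_1',J_2'$ determine the trajectory.'' This justification fails: every curve in $\Gamma_{\Omega'}$ has the same $q_{\Omega'}$-length $a'$, and $J_i'=\Pi^{-1}(I_i')$ is an entire two-dimensional piece of boundary, so neither length nor the bare location of endpoints singles out a trajectory of $q_{Q'}$. Nothing you have written rules out $f$ twisting the circle's worth of lifts of one trajectory in $Q$ onto lifts of \emph{different} trajectories in $Q'$; the ``bookkeeping with the $S^1$-action'' you promise at the end presupposes precisely the fibre-compatibility that is at issue.

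Your modulus discussion is also muddled. You begin in a reasonable direction --- feeding the extremal densities into Theorem~\ref{th3.1.1} to force an equality case and extract pointwise control on $K(\cdot,f)$ --- but you then summarise the outcome as ``forces the image of a horizontal trajectory to again be a horizontal trajectory traversed so that the $q_{\Omega'}$-length parameter is an affine function of the $q_\Omega$-length parameter'', which is just the dilation hypothesis restated. What the pull-back density condition should actually buy is pointwise information about the Beltrami coefficient of $f$ relative to $q_\Omega$ and about the contact Jacobian; it is \emph{that} data, inserted into the contact relations $\overline Z\big(f_2+i|f_1|^2\big)=\mu\,Z\big(f_2+i|f_1|^2\big)$, that one uses to compute that $f_2+i|f_1|^2$ is invariant under $(z,t)\mapsto(e^{i\theta}z,t)$. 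Your outline never carries out (or even sets up) this computation.
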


\begin{remark}
The map $g$ of the above theorem dilates horizontal trajectories of $(q_Q , q_{Q'})$. Moreover, according to Theorem 3.9 in \cite {Tim}, $g$ is symplectic with respect to the hyperbolic area forms on $\mathbb H$.
\end{remark}

The next examples are applications of this theorem in order to find all quasiconformal maps in appropriate classes of maps between domains of $\h$ that dilate horizontal trajectories. For cylinders, there is, up to composition with a vertical rotation, a unique quasiconformal map (in an appropriate class of quasiconformal maps) which dilates horizontal trajectories. For cylinders from which a smaller cylinder has been removed, there can be no quasiconformal map (in an appropriate class of quasiconformal maps) which dilates horizontal trajectories. Finally, for spherical annuli, there is a $2$-parameters family of quasiconformal maps (in an appropriate class of quasiconformal maps) dilating horizontal trajectories.

\begin{example}
Consider cylinders $C_{a,b}$ and $C_{a',b'}$ where
\[C_{r,R}= \{ (z,t) \in \h \ | \ |z|^2 < R, t\in ]0,r[\},\]
for $a,b,a',b' > 0$ satisfying
\[ \frac{ab'}{a'b} >1\]
and the trajectories of the quadratic differential 
\[q=\Pi^{\ast}\mathrm{d}w^2 = [-4\overline z^2 \mathrm{d} z^2]_\omega\]
where $\Pi : (z,t) \mapsto t+i|z|^2$. Its vertical trajectories are the cylindrical radii
\[ \delta_{z,t} (s) = (sz,t)\]
for $|z|=1$, $t\in ]0,a[$ and $s\in ]0,\sqrt b[$ and horizontal trajectories are the curves
\[ \gamma_z (s) = \left( z e^{-i\frac{s}{2|z|^2}}, s\right)\] 
for $0<|z|<\sqrt b$ and $s \in ]0,a[$ (see Figure 1).

\begin{figure}[!h]
\center
\includegraphics[width=10cm,height=7cm]{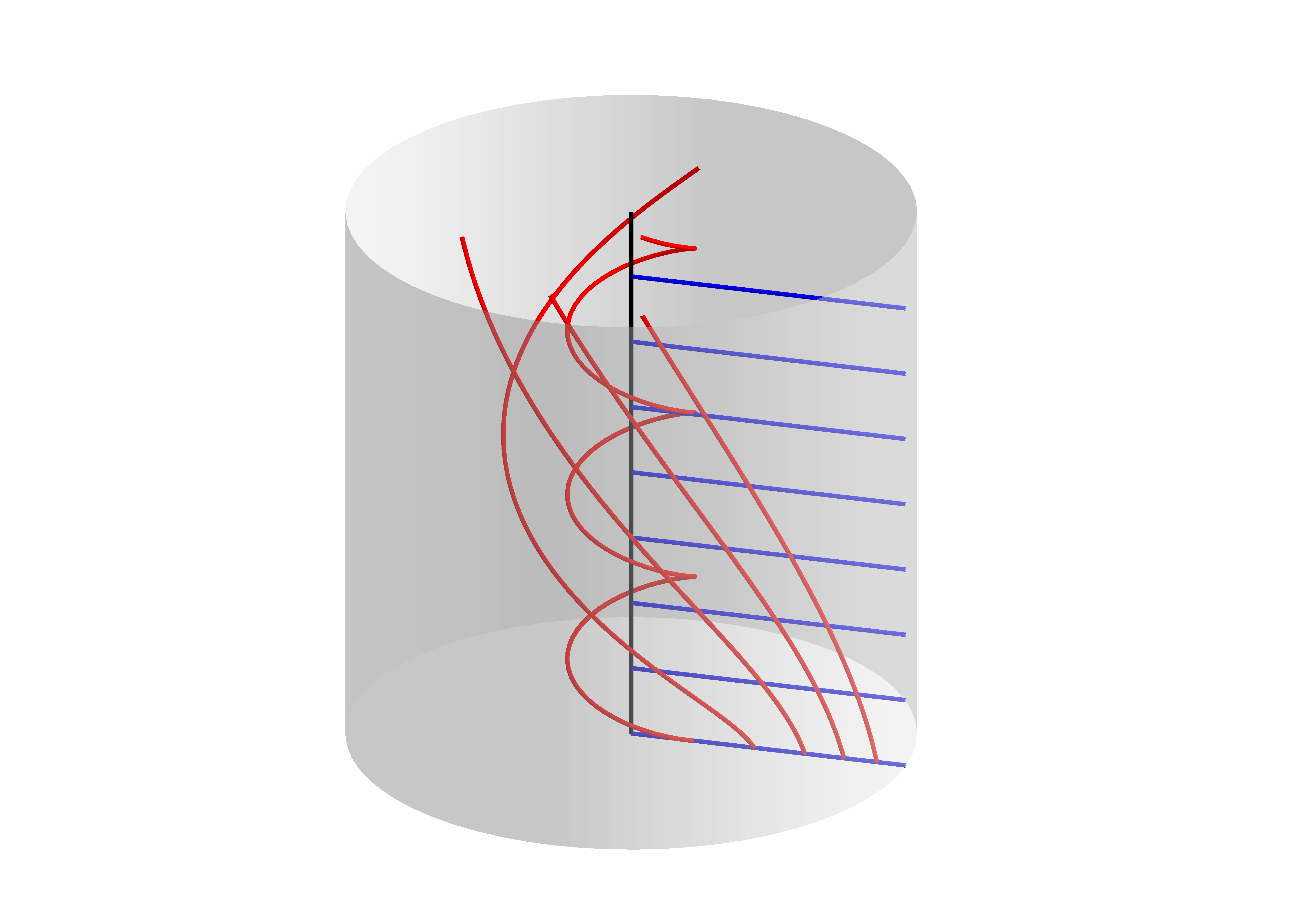}
\caption{Horizontal and vertical trajectories of the quadratic differential $q=[-4\overline z^2 \mathrm{d} z^2]_\omega$ lying in a cylinder. Vertical trajectories are rotations around the vertical axis of the cylindrical radii and horizontal are rotations around the vertical axis of the drawn curves connecting the boundary discs. }
\end{figure}

Consider the class $\mathcal F$ of quasiconformal maps from $C_{a,b}$ to $C_{a',b'}$ which extend homeomorphically to the boundary and map
\begin{itemize}
\item{the disc $\{(z,t) \in \h \ | \ |z|\le \sqrt b  \text{ \& } t = 0\}$ to the disc $\{(z,t) \in \h \ | \ |z|\le \sqrt {b'}  \text{ \& } t = 0\}$,}
\item{ and the disc $\{(z,t) \in \h \ | \ |z|\le \sqrt b  \text{ \& } t = a\}$ to the disc $\{(z,t) \in \h \ | \ |z|\le \sqrt {b'}  \text{ \& } t = a'\}$,}
\end{itemize}
Let $\Gamma_0$ be the family of all horizontal trajectories of $q$. Then, Theorem 2.9 in \cite {Tim} states that the map $f_0$ defined by 
\[ f_0 (z,t) = \left( \frac{\sqrt{b'} z e^{\frac{it}{2b} \left( 1- \frac{a'b}{ab'} \right)}}{\sqrt{\left( 1-\frac{ab'}{a'b}\right)|z|^2 + \frac{ab'}{a'}}}, \frac{a'}{a} t \right)\]
is a quasiconformal map in $\mathcal F$ which minimizes the mean distortion in $\mathcal F$ for the extremal density of $\Gamma_0$ and it is the unique such minimizer up to composition with a vertical rotation. Moreover, $f_0$ $\frac{a'}{a}$-dilates horizontal trajectory of $q$: $f$ sends the horizontal trajectory $s \longmapsto \gamma_z (s)$ to the horizontal trajectory $s \longmapsto \gamma_{z'} \left(\frac{a'}{a} s\right)$ with $z' = \frac{\sqrt{b'} z}{\sqrt{ \left( 1-\frac{ab'}{a'b} \right) |z|^2 + \frac{ab'}{a'}}}$. But it only preserves vertical trajectories of $q$: $f$ sends the vertical trajectory $s \longmapsto \delta_{z,t} (s)$ to the vertical trajectory $s \longmapsto \delta_{z' , t'} \left( \frac{\sqrt{b'} s}{\sqrt{\left( 1-\frac{ab'}{a'b} \right) s^2 + \frac{ab'}{a'}}} \right)$ with $t' = \frac{a'}{a} t$ and $z' = ze^{\frac{it}{2b} \left( 1-\frac{a'b}{ab'} \right)}$.
\newline

Theorem \ref{th3.2.3} says more: up to composition with a vertical rotation, $f_0$ is the only $C^2$ quasiconformal map in $\mathcal F$ which $\frac{a'}{a}$-dilates horizontal trajectories of $q$.
\newline
Indeed, let $f$ be such a map. Then, according to Theorem \ref{th3.2.3}, there is a quasiconformal map $g : R_{a,b} \longrightarrow R_{a',b'}$ such that $\Pi \circ f = g \circ \Pi$. Since $f$ $\frac{a'}{a}$-dilates horizontal trajectories of $q$, for every $x+iy \in R_{a,b}$,
\[ \Re (g (x+iy)) = \frac{a'}{a} x.\]
Then, Theorem 3.9 in \cite {Tim} states that $g$ must be a symplectomorphism with respect to the hyperbolic area form of the upper half-plane. Thus, $\varphi = \Im (g)$ satisfies for every $x+iy \in R_{a,b}$,
\[ \frac{a'}{a} \frac{\partial \varphi}{\partial y} (x+iy) \frac{1}{\varphi (x+iy) ^2} = \frac{1}{y^2}.\]
Solving, we find for every $x+iy \in R_{a,b}$,
\[\varphi (x+iy) = \frac{a'y}{a+ a'yh(x)}\]
for a function $h$. Moreover, since $f$ sends $\{ (z,t) \in \h \ | \ |z|= \sqrt{b} \text{ \& } 0\le t \le a \}$ to $\{ (z,t) \in \h \ | \ |z|= \sqrt{b} \text{ \& } 0\le t \le a \}$, for every $x \in [0,a]$,
\[ \varphi (x+ib) = b'\]
and so, $h$ is constant with value $b' - \frac{a}{a'b}$. Thus, $g$ is completely determined and so $f$ is also determined up to composition with a vertical rotation (this is a consequence of Theorem 3.9 in \cite {Tim}). Then, $f$ is, up to composition with a vertical rotation, $f_0$.
\newline

On the other hand, there is no $C^2$ quasiconformal map in $\mathcal F$ that $\sqrt{\frac{b'}{b}}$-dilates vertical trajectories of $q$. Indeed, let $f$ be such a map and write it in cylindrical coordinates $z=re^{i\theta}$, $t=t$ as $(R, \Theta , T)$. Then, the fact that $f$ sends vertical trajectories to $s \longmapsto \delta_{z,t} \left( \sqrt{\frac{b'}{b}} s \right)$ implies that
\[ \frac{\partial T}{\partial r} = \frac{\partial \Theta}{\partial r} = 0 \text{ and } R = \sqrt{\frac{b'}{b}} r.\]
The contact form $\omega$ in cylindrical coordinates is
\[ \mathrm{d} t + 2r^2 \mathrm{d} \theta.\]
Since $f$ is contact, $R,\Theta$ and $T$ satisfy
\begin{eqnarray}
\frac{\partial T}{\partial t} + 2 \frac{b'}{b} r^2 \frac{\partial \Theta}{\partial t} & = & \frac{1}{2r^2} \frac{\partial T}{\partial \theta} + \frac{b'}{b} \frac{\partial \Theta}{\partial \theta}. 
\end{eqnarray}
Then, differentiating it with respect to $r$ gives
\[ 2\frac{b'}{b} r^2 \frac{\partial \Theta}{\partial t} = -\frac{1}{2r^2} \frac{\partial T}{\partial \theta}\]
and reporting this expression in $(2)$, we obtain
\[ \frac{\partial T}{\partial t} + \frac{1}{r^2} \frac{\partial T}{\partial \theta} = \frac{b'}{b} \frac{\partial \Theta}{\partial \theta}\]
leading to
\[ \frac{\partial T}{\partial \theta} = 0.\]
This, together with the fact that $\frac{\partial R}{\partial \theta} = 0$ implies that there is a map $g : R_{a,b} \longrightarrow R_{a',b'}$ such that
\[ \Pi \circ f = g \circ \Pi.\]
Since $R = \sqrt{\frac{b'}{b}} r$, for every $x+iy \in R_{a,b}$
\[ \Im (g(x+iy)) = \frac{b'}{b} y.\]
Again, Theorem 3.9 in \cite {Tim} implies that $g$ is a symplectomorphism with respect to the hyperbolic area form of the upper half-plane which leads to
\[ \frac{\partial \Re (g)}{\partial x} = \frac{b'}{b}.\]
Since $g$ maps $\{ w \in \overline {R_{a, b}} \ | \ \Re (w) = 0 \}$ on $\{ w \in \overline {R_{a',b'}} \ | \ \Re (w) = 0\}$, it must be the dilation with factor $\frac{b'}{b}$ which is impossible by the hypothesis $\frac{ab'}{a'b} >1$. 
\newline

Consider two domains
\[D_{a, b} = \{ (z,t) \in \h \ | \ 0<t<a, \ 1< |z|^2 < b+1\} \text{ and}\]
\[D_{a', b'} = \{ (z,t) \in \h \ | \ 0<t<a', \ 1< |z|^2 < b'+1\}\]
for $a, b, a', b' > 0$ with
\[\frac{a(b'+1)}{a'(b+1)} > 1\]
and the trajectories of the quadratic differential 
\[q=[-4\overline z^2 \mathrm{d} z^2]_\omega.\]
Let $\mathcal F$ be the class of quasiconformal maps from $D_{a,b}$ to $D_{a',b'}$ which extend homeomorphically and map the two boundary annuli to their corresponding ones and the two boundary cylinders to their corresponding ones. Denote $\gamma_z$ the horizontal trajectories of $q$. Then, Example 3.20 in \cite {Tim} states that a quasiconformal map $g_0 \in \mathcal F$ which $\frac{a'}{a}$-dilates horizontal trajectories of $q$ exists if and only if 
\[\frac{ab}{b+1} = \frac{a'b'}{b'+1}\]
(meaning that the rectangles $\{ w \in \C \ | \ 0< \Re (w) < a, \ 1 < \Im (w) < b+1\}$ and $\{ w \in \C \ | \ 0< \Re (w) < a', \ 1 < \Im (w) < b'+1\}$ have the same hyperbolic area). If it is the case, then $g_0$ is, up to composition with a vertical rotation, the restriction to $D_{a,b}$ of the map $f_0 : C_{a, b+1} \longrightarrow C_{a', b'+1}$ given previously (with $b$ replaced by $b+1$ and $b'$ by $b'+1$).
\end{example}

\begin{example}
For the last example, consider two spherical annuli $A_a$ and $A_{a^k}$ for $a > 1$ and $0<k<1$, where
\[ A_r = \{ (z,t) \in \h \ | \ 1 < \| (z,t) \|_\h < r \},\]
foliated by radial curves
\[ \gamma_{y,\alpha} (s) = \left( \sqrt{e^s \sin (y)} \alpha e^{-\frac{is}{2} \cot (y)}, e^s \cos (y)\right)\]
for $|\alpha|=1$, $y \in ]0,\pi[$. These curves are horizontal trajectories for the quadratic differential 
\[q = \Pi^\ast \left(\frac{\mathrm{d}w^2}{w^2}\right) = [\frac{-4\overline z ^2}{(t+i|z|^2)^2} \mathrm{d} z^2]_\omega\]
where $\Pi : (z,t) \mapsto t+i|z|^2$; and its vertical trajectories are spherical arcs
\[ \delta_{x,\alpha} (s) = \left(  \sqrt{e^x \sin (s)} \alpha e^{\frac{is}{2}} , e^x \cos (s) \right)\]
for $|\alpha| = 1$, $x \in ]0, 2 \log a[$ (see Figure 2). 

\begin{figure}\label{fig2}
\center
\includegraphics[width=15cm,height=7cm]{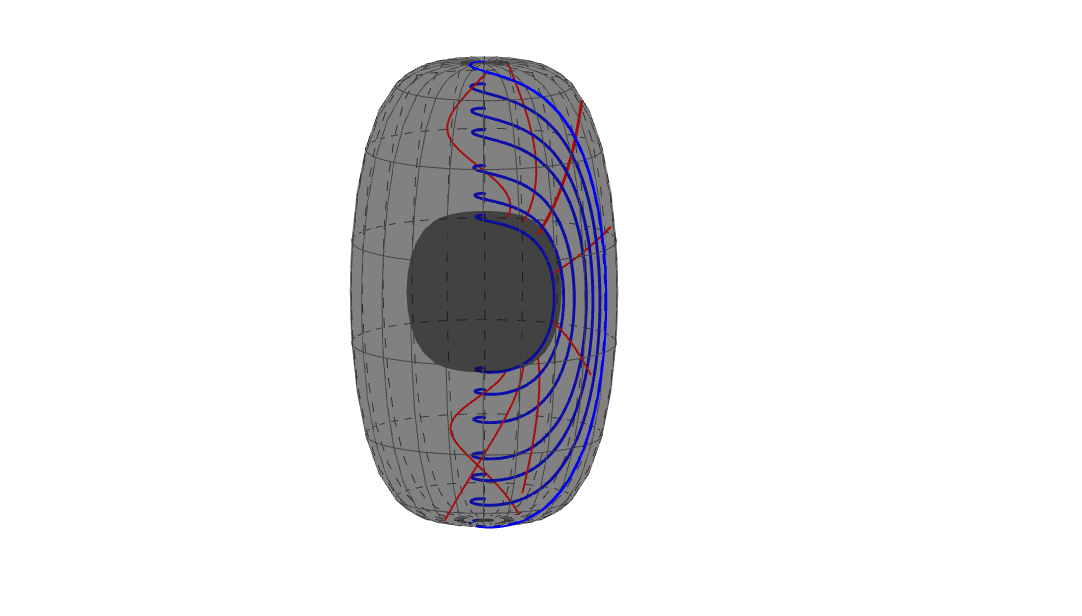}
\caption{Horizontal and vertical trajectories for the quadratic differential $q = \left[\frac{-4\overline z ^2}{(t+i|z|^2)^2} \mathrm{d} z^2\right]_\omega$ lying in an annulus. Horizontal trajectories are rotations around the vertical axis of the curves connecting the boundary spheres and vertical trajectories are rotations around the vertical axis of the spherical arcs. }
\end{figure}

It will be more convenient here to write in {\it logarithmic coordinates} introduced in \cite {Pla}:
\[ (z,t) = \left(i\cos ^{\frac{1}{2}} \psi e^{\frac{\xi + i (\psi - 3\eta)}{2}} , - \sin \psi e^\xi \right)\]
for $\xi \in \R$, $\psi \in [-\frac{\pi}{2} , \frac{\pi}{2} ]$ and $\psi - 3 \pi \le 2\eta < \psi + \pi$. In those coordinates, up to parametrization, horizontal trajectories of $q$ are
\[ \widetilde\gamma_{\psi , \eta} (s) = \left( s , \psi , \eta - \frac{\tan \psi}{3} s \right)\]
and its vertical trajectories are
\[  \widetilde \delta_{\xi , \eta} (s) = \left( \xi , s , \eta\right).\]
Let $\mathcal F$ be the class of all quasiconformal maps from $A_a$ to $A_{a^k}$ which extend homeomorphically to the boundary and map the boundary Kor\'anyi spheres of $A_a$ to the respective boundary Kor\'anyi spheres of $A_{a^k}$. Let $f_k$ be the radial stretch map defined in logarithmic coordinates by
\[ f_k (\xi , \psi , \eta) = \left(k\xi , \tan ^{-1} \left(\frac{\tan \psi}{k}\right), \eta \right).\]
Then, Theorem 2 in \cite {BFP} states that $f_k$ minimizes the mean distortion in $\mathcal F$ for the extremal density of the family of horizontal trajectories of $q$ in $A_a$. In \cite {BFP2} the authors proved that, up to composition with a vertical rotation, $f_k$ the only such minimizer. 
Moreover, $f_k$ $k$-dilates horizontal trajectories of $q$: $f_k$ maps the horizontal trajectory $s \longmapsto \widetilde \gamma_{\psi , \eta} (s)$ to the horizontal trajectory $s \longmapsto \widetilde \gamma_{\tan ^{-1} \left( \frac{\tan \psi}{k} \right) , \eta} (ks)$; and preserves vertical trajectories of $q$: $f_k$ sends the vertical trajectory $s \longmapsto \widetilde \delta_{\xi , \eta} (s)$ to the vertical trajectory $s \longmapsto \widetilde \delta_{k\xi , \eta} \left( \tan ^{-1} \left(\frac{ \tan s}{k} \right) \right)$.
\newline

However, contrary to the case of cylinders, composition with vertical rotations of $f_k$ are not the only quasiconformal maps in $\mathcal F$ which $k$-dilates horizontal trajectories of $q$. Indeed, for every real number $D$, the map defined in logarithmic coordinates by
\[ g_{D}: \left( \xi , \psi , \eta \right) \longmapsto \left( k\xi , \tan ^{-1} \left( \frac{\tan \psi}{k} + D\right), \eta - \frac{kD\xi}{3} \right)\]
is a quasiconformal map in $\mathcal F$ which sends  $s \longmapsto \widetilde \gamma_{\psi , \eta} (s)$ to $s \longmapsto \widetilde \gamma_{\psi ' , \eta '} (ks)$. Up to composition with a vertical rotation, these maps $g_D$ are the only $C^2$ quasiconformal maps in $\mathcal F$ which $k$- dilate horizontal trajectories of $q$ (this is a consequence of Theorem \ref{th3.2.3}).
\newline

On the other hand, as in the case of cylinders, there is no $C^2$ quasiconformal map in $\mathcal F$ which $1$-dilate vertical trajectories of $q$. The proof of this goes along the same lines as the one for cylinder, except that it is more convenient to use logarithmic coordinates. 
\end{example}

\begin{bibdiv}
\begin{biblist}

\bib{AL}{article}{
   author={Altomani, A.},
   author={Lawn, M.-A.},
   title={Isometric and CR pluriharmonic immersions of three dimensional CR
   manifolds in Euclidean spaces},
   journal={Hokkaido Math. J.},
   volume={42},
   date={2013},
   number={2},
   pages={209--238},
   issn={0385-4035},
   review={\MR{3112456}},
   doi={10.14492/hokmj/1372859585},
}

\bib{Aus}{book}{
   author={Auslander, L.},
   title={Lecture notes on nil-theta functions},
   note={Regional Conference Series in Mathematics, No. 34},
   publisher={American Mathematical Society, Providence, R.I.},
   date={1977},
   pages={vii+96},
   isbn={0-8218-1684-5},
   review={\MR{0466409}},
}

\bib{BFP}{article}{
   author={Balogh, Z. M.},
   author={F\"assler, K.},
   author={Platis, I. D.},
   title={Modulus method and radial stretch map in the Heisenberg group},
   journal={Ann. Acad. Sci. Fenn. Math.},
   volume={38},
   date={2013},
   number={1},
   pages={149--180},
   issn={1239-629X},
   review={\MR{3076803}},
   doi={10.5186/aasfm.2013.3811},
}

\bib{BFP2}{article}{
   author={Balogh, Z. M.},
   author={F\"assler, K.},
   author={Platis, I. D.},
   title={Uniqueness of minimisers for a Gr\"otzsch-Belinski\u\i \ type inequality
   in the Heisenberg group},
   journal={Conform. Geom. Dyn.},
   volume={19},
   date={2015},
   pages={122--145},
   issn={1088-4173},
   review={\MR{3343051}},
   doi={10.1090/ecgd/278},
}

\bib{Bog}{book}{
   author={Boggess, A.},
   title={CR manifolds and the tangential Cauchy-Riemann complex},
   series={Studies in Advanced Mathematics},
   publisher={CRC Press, Boca Raton, FL},
   date={1991},
   pages={xviii+364},
   isbn={0-8493-7152-X},
   review={\MR{1211412}},
}

\bib{BS}{article}{
   author={Burns, D., Jr.},
   author={Shnider, S.},
   title={Spherical hypersurfaces in complex manifolds},
   journal={Invent. Math.},
   volume={33},
   date={1976},
   number={3},
   pages={223--246},
   issn={0020-9910},
   review={\MR{0419857}},
   doi={10.1007/BF01404204},
}

\bib{Cap}{article}{
   author={Capogna, L.},
   title={Regularity of quasi-linear equations in the Heisenberg group},
   journal={Comm. Pure Appl. Math.},
   volume={50},
   date={1997},
   number={9},
   pages={867--889},
   issn={0010-3640},
   review={\MR{1459590}},
   doi={10.1002/(SICI)1097-0312(199709)50:9<867::AID-CPA3>3.0.CO;2-3},
}

\bib{Fol}{article}{
   author={Folland, G. B.},
   title={Compact Heisenberg manifolds as CR manifolds},
   journal={J. Geom. Anal.},
   volume={14},
   date={2004},
   number={3},
   pages={521--532},
   issn={1050-6926},
   review={\MR{2077163}},
   doi={10.1007/BF02922102},
}

\bib{FK}{book}{
   author={Folland, G. B.},
   author={Kohn, J. J.},
   title={The Neumann problem for the Cauchy-Riemann complex},
   note={Annals of Mathematics Studies, No. 75},
   publisher={Princeton University Press, Princeton, N.J.; University of
   Tokyo Press, Tokyo},
   date={1972},
   pages={viii+146},
   review={\MR{0461588}},
}

\bib{GL}{article}{
   author={Garfield, P. M.},
   author={Lee, J. M.},
   title={The Rumin complex on CR manifolds},
   note={CR geometry and isolated singularities (Japanese) (Kyoto, 1996)},
   journal={S\=urikaisekikenky\=usho K\=oky\=uroku},
   number={1037},
   date={1998},
   pages={29--36},
   review={\MR{1660485}},
}

\bib{Gol}{book}{
   author={Goldman, W. M.},
   title={Complex hyperbolic geometry},
   series={Oxford Mathematical Monographs},
   note={Oxford Science Publications},
   publisher={The Clarendon Press, Oxford University Press, New York},
   date={1999},
   pages={xx+316},
   isbn={0-19-853793-X},
   review={\MR{1695450}},
}

\bib{KR}{article}{
   author={Kohn, J. J.},
   author={Rossi, H.},
   title={On the extension of holomorphic functions from the boundary of a
   complex manifold},
   journal={Ann. of Math. (2)},
   volume={81},
   date={1965},
   pages={451--472},
   issn={0003-486X},
   review={\MR{0177135}},
   doi={10.2307/1970624},
}

\bib{KR1}{article}{
   author={Kor\'anyi, A.},
   author={Reimann, H. M.},
   title={Quasiconformal mappings on the Heisenberg group},
   journal={Invent. Math.},
   volume={80},
   date={1985},
   number={2},
   pages={309--338},
   issn={0020-9910},
   review={\MR{788413}},
   doi={10.1007/BF01388609},
}

\bib{KR2}{article}{
   author={Kor\'anyi, A.},
   author={Reimann, H. M.},
   title={Foundations for the theory of quasiconformal mappings on the
   Heisenberg group},
   journal={Adv. Math.},
   volume={111},
   date={1995},
   number={1},
   pages={1--87},
   issn={0001-8708},
   review={\MR{1317384}},
   doi={10.1006/aima.1995.1017},
}

\bib{PP}{article}{
   author={Parker, J. R.},
   author={Platis, I. D.},
   title={Complex hyperbolic quasi-Fuchsian groups},
   conference={
      title={Geometry of Riemann surfaces},
   },
   book={
      series={London Math. Soc. Lecture Note Ser.},
      volume={368},
      publisher={Cambridge Univ. Press, Cambridge},
   },
   date={2010},
   pages={309--355},
   review={\MR{2665016}},
}

\bib{Pla}{article}{
   author={Platis, I. D.},
   title={The geometry of complex hyperbolic packs},
   journal={Math. Proc. Cambridge Philos. Soc.},
   volume={147},
   date={2009},
   number={1},
   pages={205--234},
   issn={0305-0041},
   review={\MR{2507316}},
   doi={10.1017/S0305004109002333},
}

\bib{Rum}{article}{
   author={Rumin, M.},
   title={Formes diff\'erentielles sur les vari\'et\'es de contact},
   language={French},
   journal={J. Differential Geom.},
   volume={39},
   date={1994},
   number={2},
   pages={281--330},
   issn={0022-040X},
   review={\MR{1267892}},
}

\bib{Tana}{book}{
   author={Tanaka, N.},
   title={A differential geometric study on strongly pseudo-convex
   manifolds},
   note={Lectures in Mathematics, Department of Mathematics, Kyoto
   University, No. 9},
   publisher={Kinokuniya Book-Store Co., Ltd., Tokyo},
   date={1975},
   pages={iv+158},
   review={\MR{0399517}},
}

\bib{Tan}{article}{
   author={Tang, P.},
   title={Regularity and extremality of quasiconformal homeomorphisms on CR
   $3$-manifolds},
   journal={Ann. Acad. Sci. Fenn. Math.},
   volume={21},
   date={1996},
   number={2},
   pages={289--308},
   issn={0066-1953},
   review={\MR{1404088}},
}

\bib{Tim}{article}{
   author={Timsit, R.},
   title={Geometric construction of quasiconformal mappings in the
   Heisenberg group},
   journal={Conform. Geom. Dyn.},
   volume={22},
   date={2018},
   pages={99--140},
   issn={1088-4173},
   review={\MR{3845546}},
   doi={10.1090/ecgd/323},
}

\bib{Wan}{article}{
   author={Wang, W.},
   title={The Teichm\"uller distance on the space of spherical CR structures},
   journal={Sci. China Ser. A},
   volume={49},
   date={2006},
   number={11},
   pages={1523--1538},
   issn={1006-9283},
   review={\MR{2288212}},
   doi={10.1007/s11425-006-2052-y},
}

\bib{Web}{article}{
   author={Webster, S. M.},
   title={Pseudo-Hermitian structures on a real hypersurface},
   journal={J. Differential Geom.},
   volume={13},
   date={1978},
   number={1},
   pages={25--41},
   issn={0022-040X},
   review={\MR{520599}},
}

\end{biblist}
\end{bibdiv}

\Addresses

\end{document}